\documentclass[final,3p]{elsarticle}
\usepackage{amsmath,amssymb}
\usepackage[all]{xy}
\usepackage{latexsym}
\usepackage{amsthm,a4wide,color}
\usepackage{amsmath,amscd,verbatim}
\usepackage{hyperref}
\usepackage{graphicx}

\theoremstyle{plain}
  \newtheorem{thm}{Theorem}[section]
  \newtheorem{lem}[thm]{Lemma}
  \newtheorem{prop}[thm]{Proposition}
  \newtheorem{cor}[thm]{Corollary}
\theoremstyle{definition}
  \newtheorem{defn}[thm]{Definition}
  
  \newtheorem{exmp}[thm]{Example}
  \newtheorem{rem}[thm]{Remark}
  \newtheorem{note}[thm]{Note}

\makeatletter \def\ps@pprintTitle{  \let\@oddhead\@empty  \let\@evenhead\@empty  \def\@oddfoot{\centerline{\thepage}} \let\@evenfoot\@oddfoot} \makeatother

\begin{document}

\newcommand{\oto}{{\lra\hspace*{-3.1ex}{\circ}\hspace*{1.9ex}}}

\newcommand{\lam}{\lambda}
\newcommand{\da}{\downarrow}
\newcommand{\Da}{\Downarrow\!}
\newcommand{\D}{\Delta}
\newcommand{\ua}{\uparrow}
\newcommand{\ra}{\rightarrow}
\newcommand{\la}{\leftarrow}
\newcommand{\lra}{\longrightarrow}
\newcommand{\lla}{\longleftarrow}
\newcommand{\rat}{\!\rightarrowtail\!}
\newcommand{\up}{\upsilon}
\newcommand{\Up}{\Upsilon}
\newcommand{\ep}{\epsilon}
\newcommand{\ga}{\gamma}
\newcommand{\Ga}{\Gamma}
\newcommand{\Lam}{\Lambda}
\newcommand{\CF}{{\cal F}}
\newcommand{\CG}{{\cal G}}
\newcommand{\CH}{{\cal H}}
\newcommand{\CN}{{\mathcal{N}}}
\newcommand{\CB}{{\cal B}}
\newcommand{\CT}{{\cal T}}
\newcommand{\CS}{{\cal S}}
\newcommand{\CV}{\mathfrak{V}}
\newcommand{\CU}{\mathfrak{U}}
\newcommand{\CP}{{\cal P}}
\newcommand{\CQ}{\mathcal{Q}}
\newcommand{\mq}{\mathcal{Q}}
\newcommand{\cu}{{\underline{\cup}}}
\newcommand{\ca}{{\underline{\cap}}}
\newcommand{\nb}{{\rm int}}
\newcommand{\Si}{\Sigma}
\newcommand{\si}{\sigma}
\newcommand{\Om}{\Omega}
\newcommand{\bm}{\bibitem}
\newcommand{\bv}{\bigvee}
\newcommand{\bw}{\bigwedge}
\newcommand{\dda}{\downdownarrows}
\newcommand{\dia}{\diamondsuit}
\newcommand{\y}{{\bf y}}
\newcommand{\colim}{{\rm colim}}
\newcommand{\fR}{R^{\!\forall}}
\newcommand{\eR}{R_{\!\exists}}
\newcommand{\dR}{R^{\!\da}}
\newcommand{\uR}{R_{\!\ua}}
\newcommand{\swa}{{\swarrow}}
\newcommand{\sea}{{\searrow}}
\newcommand{\bbA}{{\mathbb{A}}}
\newcommand{\frF}{{\mathfrak{F}}}
\newcommand{\frN}{{\mathfrak{N}}}
\newcommand{\id}{{\rm id}}
\newcommand{\cl}{{\rm cl}}
\newcommand{\sub}{{\rm sub}}

\numberwithin{equation}{section}
\renewcommand{\theequation}{\thesection.\arabic{equation}}

\begin{frontmatter}
\title{Sobriety of quantale-valued  cotopological spaces}

\author{Dexue Zhang} 
\address{School of Mathematics, Sichuan University, Chengdu 610064, China}

\begin{abstract}
For each commutative and integral quantale, making use of the fuzzy order between closed sets, a theory of sobriety for quantale-valued  cotopological spaces is established based on irreducible closed sets.
\end{abstract}

\begin{keyword}
 Fuzzy topology   \sep quantale \sep quantale-valued order   \sep quantale-valued cotopological space \sep sobriety \sep irreducible closed set
\end{keyword}

\end{frontmatter}

\section{Introduction}
A topological space $X$ is   sober if each of its irreducible closed subsets is the closure of exactly one point in $X$. Sobriety of topological spaces can be described via the well-known adjunction \[\mathcal{O}\dashv{\rm pt}\] between the category {\sf Top} of topological spaces and the opposite of the category {\sf Frm} of frames  \cite{Johnstone}. Precisely, $X$ is sober if $\eta_X\colon  X\lra {\rm pt}(\mathcal{O}(X))$ is a bijection (hence a homeomorphism), where $\eta$ denotes the unit of the adjunction $\mathcal{O}\dashv{\rm pt}$.

In the classical setting, a topological space can be described in terms of open sets as well as   closed sets, and we can switch between open sets and closed sets by taking complements. So, it makes no difference whether we choose to work with closed sets or with open sets.  In the fuzzy setting,  since the table of truth-values is usually a quantale, not a Boolean algebra,  there is no natural way to switch between open sets and closed sets. So,  it may make a difference whether we postulate topological spaces in terms of open sets or in terms of closed sets.  An example in this regard is exhibited in \cite{CZ09,CLZ11}.

The frame approach to sobriety of topological spaces makes use of open sets; while the irreducible-closed-set approach  makes use of closed sets. Extending the theory of sober spaces to the fuzzy setting is an interesting topic in fuzzy topology. Most of the existing works  focus on the frame approach; that is, to find a fuzzy counterpart of the category  {\sf Frm} of frames, then establish an adjunction between the category of fuzzy topological spaces and that of  \emph{fuzzy frames}. Works in this regard include Rodabaugh \cite{Rodabaugh}, Zhang and Liu \cite{ZL95}, Kotz\'{e} \cite{Kotze97,Kotze01}, Srivastava and Khastgir \cite{SK98}, Pultr and Rodabaugh \cite{PR03a,PR03b,PR08a,PR08b},   Guti\'{e}rrez Garc\'{i}a,   H\"{o}hle and de Prada Vicente \cite{GHP},  and Yao \cite{Yao11,Yao12}, etc. But, the irreducible-closed-set approach to sobriety of fuzzy topological spaces is seldom touched, except in Kotz\'{e} \cite{Kotze97,Kotze01}.

In this paper, making use of the fuzzy inclusion order between closed sets, we establish a theory of sobriety for quantale-valued  topological spaces based on irreducible closed sets. Actually, this theory concerns sobriety of \emph{quantale-valued cotopological spaces}. By a  quantale-valued cotopological space  we mean a ``fuzzy topological space" postulated in terms of closed sets (see Definition \ref{cotopology}). The term \emph{quantale-valued  topological space} is reserved for ``fuzzy topological space" postulated in terms of open sets (see Definition \ref{topology}).

It should be noted that in most works on fuzzy frames, the table of truth-values is assumed to be a complete Heyting algebra (or, a frame),  even a completely distributive lattice sometimes. But, in this paper, the table of truth-values  is only assumed to be a commutative and integral quantale. Complete Heyting algebra, BL-algebras and left continuous t-norms, are important examples of such quantales.

The contents are arranged as follows. Section 2 recalls basic ideas about   quantale-valued ordered sets  and   quantale-valued  $\CQ$-cotopological spaces. Section 3, making use of the quantale-valued order between closed sets in a $\CQ$-cotopological space,  establishes a theory of sober $\CQ$-cotopological spaces based on irreducible closed sets. In particular, the sobrification of a stratified $\CQ$-cotopological space is constructed. The last section, Section 4, presents some interesting examples in the case that $\CQ$ is the unit interval $[0,1]$ coupled with a (left) continuous t-norm.

\section{Quantale-valued ordered sets and quantale-valued cotopological spaces}
In this paper, $\CQ=(Q,\&)$ always denotes a commutative and integral quantale, unless otherwise specified. Precisely,  $Q$ is a complete lattice with a bottom element $0$ and a top element $1$, $\&$ is a binary operation on $Q$ such that $(Q,\&, 1)$ is a commutative monoid  and
$p\&\bv_{j\in J}q_j=\bv_{j\in J}p\& q_j$
for all $p\in Q$ and   $\{q_j\}_{j\in J}\subseteq Q$.

Since the semigroup operation $\&$ distributes over arbitrary joins, it  determines a binary operation $\ra$ on $Q$ via the adjoint property \begin{equation*} p\&q\leq r\iff q\leq p\ra r. \end{equation*}
The  binary operation $\ra$ is called the \emph{implication}, or the \emph{residuation}, corresponding to  $\&$.

Some basic properties of the binary operations $\&$ and $\ra$ are collected below, they can be found in many places, e.g.  \cite{Belo02,Rosenthal90}.

\begin{prop} Let $\mathcal{Q}$ be a quantale. Then
\begin{enumerate}[(1)]
\item  $1\ra p=p$.
\item  $p\leq q \iff 1= p\ra q$.
\item  $p\ra(q\ra r)=(p\& q)\ra r$.
\item  $p\&(p\ra q)\leq q$.
\item  $\Big(\bv_{j\in J}p_j\Big)\ra q=\bw_{j\in J}(p_j\ra q)$.
\item  $p\ra\Big(\bw_{j\in J} q_j\Big)=\bw_{j\in J}(p\ra q_j)$.
\end{enumerate}
\end{prop}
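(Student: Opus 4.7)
The plan is to derive all six identities from the single adjunction $p\&q\leq r\iff q\leq p\ra r$, using the elementary principle that, in a poset, two elements coincide whenever they have the same collection of lower bounds, together with the commutativity, associativity, and unitality of $\&$.

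First I would dispatch (1), (2), and (4), which are essentially immediate. For (1), specialising the adjunction to the first argument $1$ gives $q\leq 1\ra p\iff 1\&q\leq p\iff q\leq p$, so $1\ra p$ and $p$ share their lower bounds and must agree. For (2), $1=p\ra q$ is equivalent to $1\leq p\ra q$ (since $1$ is the top), which by the adjunction reads $p=p\&1\leq q$. For (4), plugging the reflexivity $p\ra q\leq p\ra q$ into the adjunction (read from right to left) delivers $p\&(p\ra q)\leq q$ at once.

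For (3), I would chase through the adjunction twice, using associativity and commutativity of $\&$: for any $s\in Q$,
\begin{align*}
s\leq p\ra(q\ra r) &\iff p\&s\leq q\ra r \\
&\iff q\&(p\&s)\leq r \\
&\iff (p\&q)\&s\leq r \\
&\iff s\leq(p\&q)\ra r,
\end{align*}
and equality follows by the lower-bound principle. Items (5) and (6) are the concrete expressions of the general fact that right adjoints preserve meets. Both are handled by the same chase; for (5), $s\leq(\bv_{j}p_j)\ra q$ unfolds to $(\bv_{j}p_j)\&s\leq q$, then to $\bv_{j}(p_j\&s)\leq q$, then to $p_j\&s\leq q$ for every $j$, and finally to $s\leq\bw_{j}(p_j\ra q)$; the argument for (6) is entirely parallel and uses only the adjunction.

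There is no real obstacle: the proposition is a list of standard residuated-lattice identities and the proof amounts to careful bookkeeping with the adjoint property. The only step warranting a brief comment is (5), where one must note that distributivity of $\&$ over joins in the \emph{first} variable follows from the stipulated second-variable distributivity together with commutativity of $\&$.
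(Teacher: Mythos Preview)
Your argument is correct; each of the six items is derived cleanly from the adjunction $p\&q\leq r\iff q\leq p\ra r$, and the remark about (5) needing commutativity to turn second-variable distributivity into first-variable distributivity is exactly the right caveat. The paper itself does not prove this proposition at all---it merely states the identities and refers the reader to standard sources (B\v{e}lohl\'{a}vek, Rosenthal)---so your write-up supplies precisely the routine verification that the paper elected to omit.
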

We often write $\neg p$ for $p\ra 0$ and call it the \emph{negation} of $p$. Though it is  true that $p\leq\neg\neg p$ for all $p\in Q$, the inequality $\neg\neg p\leq p$ does not always hold. A quantale $\mathcal{Q}$ is said to satisfy the {\it law of double negation} if  \[(p\ra 0)\ra 0 = p,\] i.e., $\neg\neg p= p$,  for all $p\in Q$.

\begin{prop}\cite{Belo02}\label{properties of negation} Suppose that $\CQ$ is a quantale that satisfies the law of double negation. Then \begin{enumerate}[(1)] \item $p\ra q = \neg(p\&\neg q)=\neg q\ra \neg p$. \item $p\&q =\neg (q\ra\neg p)=\neg (p\ra\neg q)$. \item $\neg(\bw_{i\in I}p_i) = \bv_{i\in I}\neg p_i$.  \end{enumerate}\end{prop}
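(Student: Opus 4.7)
The plan is to derive each of (1)--(3) by formally manipulating the adjunction identities in the preceding proposition together with the double-negation hypothesis $\neg\neg p = p$.

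For (1), I would compute $\neg(p\&\neg q)$ by rewriting it as $(p\&\neg q)\ra 0$ and invoking the currying identity $p\ra(r\ra s)=(p\& r)\ra s$ (item (3) of the previous proposition) to transform it into $p\ra(\neg q\ra 0)=p\ra\neg\neg q=p\ra q$. The same currying, combined with commutativity of $\&$, turns $\neg q\ra\neg p=(\neg q\& p)\ra 0$ into $\neg(p\&\neg q)$, giving the second equality.

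Part (2) is then a direct consequence of part (1): substituting $\neg q$ for $q$ in the identity $p\ra q=\neg(p\&\neg q)$ gives $p\ra\neg q=\neg(p\& q)$, and applying $\neg$ to both sides together with the double negation law yields $p\& q=\neg(p\ra\neg q)$. The symmetric formula follows from commutativity of $\&$.

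For (3), the easy inequality $\bv_{i\in I}\neg p_i\leq\neg(\bw_{i\in I} p_i)$ is immediate from $\bw_i p_i\leq p_j$, which gives $\neg p_j\leq\neg\bw_i p_i$ by monotonicity of $\neg$. For the reverse direction, I would use item (5) of the previous proposition (implication converts joins on the left into meets) to compute
\[
\neg\bigl(\bv_{i\in I}\neg p_i\bigr)=\Bigl(\bv_{i\in I}\neg p_i\Bigr)\ra 0=\bw_{i\in I}(\neg p_i\ra 0)=\bw_{i\in I}\neg\neg p_i=\bw_{i\in I}p_i.
\]
Negating once more and applying double negation gives $\bv_i\neg p_i=\neg(\bw_i p_i)$.

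No step is really difficult: the whole proposition is a formal exercise in the adjoint arithmetic of $\CQ$, and the only mild subtlety is recognising that the De Morgan law (3) is obtained by combining the ``$\bv$-to-$\bw$'' behaviour of implication with double negation, rather than by any direct calculation. The main care required is simply to keep track of which identity of the previous proposition is being invoked at each point.
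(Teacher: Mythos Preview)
Your argument is correct: each step is a valid application of the adjunction identities from the preceding proposition together with the double-negation law, and the derivations of (1)--(3) go through exactly as you describe. The paper itself does not supply a proof of this proposition---it is simply quoted from \cite{Belo02}---so there is nothing to compare your approach against; your write-up would serve perfectly well as the omitted verification.
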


In the class of quantales, the quantales with the unit interval $[0,1]$ as underlying lattice are of particular interest in fuzzy set theory \cite{Ha98,KMP00}. In this case, the semigroup operation $\&$ is called a left continuous t-norm on $[0,1]$ \cite{KMP00}. If a left continuous t-norm $\&$ on $[0,1]$ is a continuous function with respect to the usual topology, then  it is called a continuous t-norm.

\begin{exmp} (\cite{KMP00})
 Some basic   t-norms:
\begin{enumerate}[(1)]
\item The minimum t-norm: $a\&b=a\wedge b=\min\{a,b\}$. The corresponding implication is given by \[a\ra
b=\left\{\begin{array}{ll} 1, & a\leq b;\\
b, & a>b.\end{array}\right.\]
\item The product t-norm: $a\&b=a\cdot b$. The
corresponding implication  is given by $$a\ra
b=\left\{\begin{array}{ll} 1, & a\leq b;\\
b/a, & a>b.\end{array}\right.$$
\item The {\L}ukasiewicz t-norm:
$a\&b=\max\{a+b-1,0\}$. The corresponding implication is given by
$$a\ra b=
\min\{1, 1-a+b\}. $$ In this case, $([0,1],\&)$ satisfies the law of double negation. \item The nilpotent minimum t-norm: $$a\& b=\left\{\begin{array}{ll} 0, & a+b\leq 1;\\ \min\{a,b\}, & a+b>1.\end{array}\right.$$ The corresponding implication  is given by $$a\ra b=\left\{\begin{array}{ll} 1, & a\leq b;\\ \max\{1-a,b\}, & a>b.\end{array}\right.$$  In this case, $([0,1],\&)$ satisfies the law of double negation. \end{enumerate}\end{exmp}


A $\mathcal{Q}$-order (or an order valued in the quantale $\CQ$) \cite{Belo02,Wagner97} on a set $X$ is a reflexive and transitive $\mathcal{Q}$-relation on $X$. Explicitly, a $\mathcal{Q}$-order on  $X$  is a map $R\colon X\times X\lra Q$ such that $1= R(x,x)$   and  $R(y,z)\& R(x,y)\leq R(x,z)$ for all $x,y,z\in X$.  The pair $(X,R)$ is called a  $\mathcal{Q}$-ordered  set. As usual, we write $X$ for the pair $(X, R)$ and $X(x,y)$ for $R(x,y)$ if no confusion would arise.

If $R\colon X\times X\lra Q$ is a $\CQ$-order on $X$, then $R^{\rm op}\colon X\times X\lra Q$, given by $R^{\rm op}(x,y)=R(y,x)$, is also a $\CQ$-order on $X$ (by commutativity of $\&$), called the opposite of $R$.

A map $f\colon X\lra Y$ between $\mathcal{Q}$-ordered sets is $\mathcal{Q}$-order-preserving if $X(x_1,x_2)\leq Y(f(x_1),f(x_2))$  for all $x_1,x_2\in X$.
We write \[\mbox{$\CQ$-{\sf Ord}}\] for the category of $\CQ$-ordered sets and $\mathcal{Q}$-order-preserving maps.

Given a set $X$, a map $A\colon X\lra Q$ is called a  fuzzy set (valued in $\CQ$), the value $A(x)$ is  interpreted as the membership degree.
The map \[\sub_X\colon Q^X\times Q^X\lra Q,\] given by
\begin{equation*}\sub_X(A, B)=\bw_{x\in X}A(x)\ra B(x),\end{equation*} defines a $\mathcal{Q}$-order on $Q^X$. The value $\sub_X(A, B)$ measures the degree that $A$ is a subset of  $B$, thus, $\sub_X$ is called the \emph{fuzzy inclusion order} on $Q^X$ \cite{Belo02}.  In particular, if $X$ is a singleton set then the $\mathcal{Q}$-ordered set $(Q^X,\sub_X)$ reduces to the $\mathcal{Q}$-ordered set $(Q,d_L)$, where \[d_L(p,q)=p\ra q.\]

For any $p\in Q$ and   $A\in Q^X$, write $p\&A, p\ra A\in Q^X$ for the fuzzy sets given by $(p\&A)(x)=p\&A(x)$ and  $(p\ra A)(x)=p\ra A(x)$, respectively. It is easy to check that
\begin{equation*}p\leq \sub_X(A, B)\iff p\&A\leq B\iff A\leq p\ra B  \end{equation*} for all $p\in Q$ and $A, B\in Q^X$. In particular, $A\leq B$ if and only if $1= \sub_X(A, B)$.  Furthermore,
\begin{equation*}p\ra \sub_X(A, B)=\sub_X(p\&A, B)=\sub_X(A,p\ra B)\end{equation*} for all $p\in Q$ and $A, B\in Q^X$.

Given a map $f\colon X\lra Y$, as usual,  define $f^\ra\colon  Q^X\lra Q^Y$ and $f^\la\colon Q^Y\lra Q^X$   by \[f^\ra(A)(y)=\bv_{f(x)=y}A(x), \quad f^\la(B)(x)= B\circ f(x).\] The fuzzy set $f^\ra(A)$ is called the image of $A$ under $f$, and $f^\la(B)$   the preimage of $B$.

The following proposition is a special case of the enriched Kan extension in category theory \cite{Kelly,Lawvere73}. A direct verification is  easy and can be found in e.g. \cite{Belo02,LZ07}.
\begin{prop}For any map $f\colon X\lra Y$, \begin{enumerate}[(1)] \item  $f^\ra\colon  (Q^X,\sub_X)\lra (Q^Y,\sub_Y)$ is $\CQ$-order-preserving; \item  $f^\la\colon  (Q^Y,\sub_Y)\lra (Q^X,\sub_X)$ is $\CQ$-order-preserving; \item $f^\ra$ is left adjoint to $f^\la$, written $f^\ra\dashv f^\la$, in the sense that \[\sub_Y(f^\ra(A),B) = \sub_X(A,f^\la(B))\] for all $A\in Q^X$ and $B\in Q^Y$. \end{enumerate} \end{prop}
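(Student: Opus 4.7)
The plan is to prove the adjunction identity (3) first—this is the substantive statement, and parts (1) and (2) then follow either by direct short computations or as standard consequences of an enriched adjunction. The proof is entirely formal, resting on the properties of joins and residuation recorded earlier in the proposition. The only step requiring a touch of care, which I would flag at the outset, is an index manipulation for the residuation of a join that appears in (3).

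\textbf{Proof of (3).} Unfolding the left-hand side,
\[
\sub_Y(f^\ra(A),B) = \bw_{y\in Y}\bigl(\bv_{f(x)=y} A(x)\bigr)\ra B(y).
\]
I would then apply part (5) of the earlier proposition on basic properties of $\&$ and $\ra$, namely $\bigl(\bv_j p_j\bigr)\ra q = \bw_j(p_j\ra q)$, to push the inner join out of the residuation, obtaining $\bw_{y\in Y}\bw_{f(x)=y}\bigl(A(x)\ra B(y)\bigr)$. For $y$ outside $f(X)$ the inner meet is empty and so equals $1$ (indeed $\bv\emptyset\ra B(y) = 0\ra B(y) = 1$), contributing nothing to the outer meet; the surviving double meet reorganizes as a single meet over $x\in X$, and substituting $B(f(x)) = f^\la(B)(x)$ yields $\sub_X(A, f^\la(B))$.

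\textbf{Proof of (1) and (2), and the main obstacle.} Part (2) is a one-liner: $\sub_Y(B_1, B_2) = \bw_{y\in Y}(B_1(y)\ra B_2(y))$ is a meet of the same integrands taken over the potentially larger set $Y$ and therefore is bounded above by $\bw_{x\in X}\bigl(B_1(f(x))\ra B_2(f(x))\bigr) = \sub_X(f^\la(B_1), f^\la(B_2))$. For (1), the slickest route is to feed $A := A_2$ and $B := f^\ra(A_2)$ into (3): reflexivity $\sub_Y(f^\ra(A_2), f^\ra(A_2)) = 1$ then forces $A_2 \le f^\la f^\ra(A_2)$, and monotonicity of $\sub_X$ in its second argument combined with another application of (3) delivers $\sub_X(A_1, A_2) \le \sub_Y(f^\ra(A_1), f^\ra(A_2))$; a direct verification using $A_2(x) \le f^\ra(A_2)(f(x))$ is equally short. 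No real obstacle arises anywhere in the argument—the only mildly delicate point is the empty-fiber discussion in the collapse of the double meet in (3).
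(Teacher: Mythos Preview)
Your argument is correct. The paper itself does not supply a proof of this proposition: it simply remarks that the statement is a special case of enriched Kan extension and that a direct verification is easy, citing \cite{Belo02,LZ07}. Your proposal is precisely such a direct verification---proving the adjunction identity (3) by unfolding the definitions and applying the distributivity law $\bigl(\bv_j p_j\bigr)\ra q=\bw_j(p_j\ra q)$, then deducing (1) from (3) via the unit inequality $A_2\le f^\la f^\ra(A_2)$, and checking (2) by an immediate restriction-of-meet argument. All steps are sound, including your handling of empty fibers in (3). So there is nothing to compare: you have filled in what the paper leaves to the reader.
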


A fuzzy upper set \cite{LZ06} in a $\CQ$-ordered set $X$ is a map $\psi\colon X\lra Q$ such that \[ X(x,y)\&\psi(x) \leq\psi(y)\] for all $x,y\in X$. It is clear that $\psi\colon X\lra Q$ is  a fuzzy upper set if and only if $\psi\colon X\lra(Q, d_L)$ is $\CQ$-order-preserving.

Dually, a fuzzy lower set  in a $\CQ$-ordered set $X$ is a map $\phi\colon X\lra Q$ such that \[\phi(y)\&X(x,y) \leq\phi(x)\] for all $x,y\in X$. Or equivalently,  $\phi\colon X^{\rm op}\lra(Q, d_L)$ is a $\CQ$-order-preserving map, where $X^{\rm op}$ is the opposite of the $\CQ$-ordered set $X$.

\begin{defn} \label{irreducible} A fuzzy lower set $\phi$ in   a $\CQ$-ordered set $X$ is irreducible  if $\bv_{x\in X}\phi(x)=1$ and \[\sub_X(\phi, \phi_1\vee\phi_2) = \sub_X(\phi, \phi_1)\vee\sub_X(\phi, \phi_2)\] for all fuzzy lower sets $\phi_1,\phi_2$ in $X$. \end{defn}

Irreducible fuzzy lower sets are a counterpart of directed lower sets \cite{Gierz2003} in the quantale-valued setting. In particular, the condition  $\bv_{x\in X}\phi(x)=1$ is  a $\CQ$-version of the requirement that a directed set should be non-empty.

The following definition is taken from \cite{CZ09, Zhang07}.



\begin{defn}\label{cotopology}  A  $\CQ$-cotopology  on a set $X$ is a subset
$\tau$ of $Q^X$  subject to the following conditions:
\begin{enumerate}
\item[(C1)] $p_X\in\tau$ for all $p\in Q$;
\item[(C2)] $A\vee B\in\tau$ for all $A, B\in\tau$;
\item[(C3)]
$\bw_{j\in J} A_j\in\tau$ for each subfamily $\{A_j\}_{j\in J}$ of
$\tau$.\end{enumerate}The pair $(X,\tau)$ is called a $\CQ$-cotopological space; elements in $\tau$ are called closed sets of  $(X,\tau)$.  A  $\CQ$-cotopology $\tau$ is stratified if
\begin{enumerate}
\item[(C4)] $p\ra A\in\tau$ for all $p\in Q$ and $A\in\tau$.
\end{enumerate} A $\CQ$-cotopology $\tau$ is co-stratified if
\begin{enumerate}
\item[(C5)] $p\&A\in\tau$ for all $p\in Q$ and $A\in\tau$.
\end{enumerate}
A $\CQ$-cotopology $\tau$ is strong if it is both stratified and co-stratified.
\end{defn}

As usual, we often write $X$, instead of $(X,\tau)$, for a $\CQ$-cotopological space.

\begin{rem} If $\CQ$ is the quantale obtained by endowing $[0,1]$ with a continuous t-norm $T$, then $\tau\subseteq [0,1]^X$ is a strong $\CQ$-cotopology on $X$ if and only if $(X,\tau^*)$ is a fuzzy $T$-neighborhood space in the sense of Morsi \cite{Morsi95a}, where $\tau^*=\{1-A\mid A\in\tau\}$. In particular, if $\CQ$ is the quantale $([0,1],\min)$, then $\tau\subseteq [0,1]^X$ is a strong $\CQ$-cotopology on $X$ if and only if $(X,\tau^*)$ is a  fuzzy neighborhood space in the sense of Lowen \cite{Lowen1982}.  \end{rem}

A map $f\colon X\longrightarrow Y$ between
$\CQ$-cotopological spaces is continuous if
$f^\la(A)=A\circ f$  is closed in $X$ whenever $A$ is closed in $Y$.  We write
\[\mbox{$\CQ$-{\sf CTop}}\] for the category of $\CQ$-cotopological
spaces and continuous maps; and write
\[\mbox{{\sf S}$\CQ$-{\sf  CTop}}\] for  the
category of stratified $\CQ$-cotopological spaces and continuous maps. It is easily seen that both $\CQ$-{\sf CTop} and {\sf S}$\CQ$-{\sf  CTop} are well-fibred topological categories over {\sf Set} in the sense of \cite{AHS}.

Given a $\CQ$-cotopological space $(X,\tau)$, its closure operator
$^-\colon Q^X\lra Q^X$ is defined by
\[\overline{A}=\bw\{B \in\tau\mid A\leq B \}\] for all $A\in Q^X$. The closure operator of a $\CQ$-cotopological
space $(X,\tau)$ satisfies the following conditions: for all $A, B\in Q^X$,
\begin{enumerate}
\item[\rm (cl1)] $\overline{p_X}=p_X$ for all $p\in Q$;
\item[\rm (cl2)] $ \overline{A} \geq  A$;
\item[\rm (cl3)]
$\overline{A\vee B}=\overline{A}\vee\overline{B}$;
\item[\rm (cl4)] $\overline{\overline{A}}=\overline{A}$.
\end{enumerate}

\begin{prop} Let $X$ be a $\CQ$-cotopological space. The following are equivalent: \begin{enumerate}[\rm(1)]
\item $X$ is stratified. \item $p\&\overline{A}\leq \overline{p\&A}$ for all $p\in Q$ and $A\in Q^X$. \item
The closure operator $^-\colon (Q^X,\sub_X)\lra (Q^X,\sub_X)$ is $\CQ$-order-preserving. \end{enumerate}
\end{prop}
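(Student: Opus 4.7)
The strategy is to prove (1)$\Rightarrow$(2)$\Rightarrow$(1) and (2)$\Leftrightarrow$(3), using only the adjunction $p\&A\leq B\iff A\leq p\ra B$ and the basic properties of the closure operator listed as (cl1)--(cl4).

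For (1)$\Rightarrow$(2), I would start from the trivial inequality $p\&A\leq\overline{p\&A}$ and transpose it across the adjunction to $A\leq p\ra\overline{p\&A}$. Since $\overline{p\&A}$ is closed and $X$ is stratified (axiom (C4)), the fuzzy set $p\ra\overline{p\&A}$ is also closed. Hence $\overline{A}\leq p\ra\overline{p\&A}$, and transposing back yields $p\&\overline{A}\leq\overline{p\&A}$.

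For (2)$\Rightarrow$(1), let $A\in\tau$ and $p\in Q$; I must show $p\ra A$ is closed, i.e.\ $\overline{p\ra A}\leq p\ra A$. Apply (2) to the fuzzy set $p\ra A$ to get $p\&\overline{p\ra A}\leq\overline{p\&(p\ra A)}$. Since $p\&(p\ra A)\leq A$ (Proposition on residuation), monotonicity of closure gives $\overline{p\&(p\ra A)}\leq\overline{A}=A$. Transposing $p\&\overline{p\ra A}\leq A$ across the adjunction yields $\overline{p\ra A}\leq p\ra A$, as required.

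For (2)$\Leftrightarrow$(3), both directions are short. Assuming (2), set $p=\sub_X(A,B)$ so that $p\&A\leq B$; then $\overline{p\&A}\leq\overline{B}$ by monotonicity of closure, and combining with (2) gives $p\&\overline{A}\leq\overline{B}$, which transposes to $\sub_X(A,B)\leq\sub_X(\overline{A},\overline{B})$, proving (3). Conversely, assuming (3), note that from $p\&A\leq p\&A$ we have $p\leq\sub_X(A,p\&A)$; applying (3) gives $p\leq\sub_X(\overline{A},\overline{p\&A})$, and transposing recovers (2).

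There is no real obstacle here; the only thing to be careful about is cleanly using the adjunction $p\&(-)\dashv p\ra(-)$ and the fact that monotonicity of closure in the ordinary sense (which follows from (cl3)) is available for free, while the fuzzy monotonicity in (3) is precisely what we are relating to stratification.
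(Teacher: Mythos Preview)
Your proof is correct and uses the same ideas as the paper. The only difference is organizational: the paper proves the cycle $(1)\Rightarrow(2)\Rightarrow(3)\Rightarrow(1)$, whereas you prove $(1)\Leftrightarrow(2)$ and $(2)\Leftrightarrow(3)$; your $(1)\Rightarrow(2)$ and $(2)\Rightarrow(3)$ are verbatim the paper's arguments, and your $(2)\Rightarrow(1)$ is essentially the same computation as the paper's $(3)\Rightarrow(1)$ (both reduce to $p\&\overline{p\ra A}\leq A$), just invoking (2) plus ordinary monotonicity rather than (3).
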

\begin{proof} $(1)\Rightarrow(2)$ Since $p\ra \overline{p\&A}$ is closed and $A\leq p\ra \overline{p\&A}$, it follows that $\overline{A} \leq p\ra \overline{p\&A}$, hence $p\&\overline{A}\leq \overline{p\&A}$.

$(2)\Rightarrow(3)$ For any $A,B\in Q^X$, let $p=\sub_X(A,B)=\bw_{x\in X}(A(x)\ra B(x))$. Since $p\&A\leq B$, then $p\&\overline{A}\leq \overline{p\&A}\leq \overline{B}$, hence $\sub_X(A,B)=p\leq \sub_X(\overline{A},\overline{B})$.

$(3)\Rightarrow(1)$ Let $A$ be a closed set and $p\in Q$. In order to see that $p\ra A$ is also closed, it suffices to check that $\overline{p\ra A}\leq p\ra A$, or equivalently, $p\leq \sub_X(\overline{p\ra A}, A)$. This is easy since $p\leq \sub_X(p\ra A,A)\leq \sub_X(\overline{p\ra A},\overline{A}) =\sub_X(\overline{p\ra A},A)$. \end{proof}

It follows immediately from item (3) in the above proposition that if $X$ is a stratified $\CQ$-cotopological space and if $B$ is a closed set  in $X$, then for all $A\in Q^X$, \[ \sub_X(\overline{A},B)\leq \sub_X(A,B)\leq\sub_X(\overline{A},\overline{B})  =\sub_X(\overline{A},B),\] hence \begin{equation*}
 \sub_X(A,B)=\sub_X(\overline{A},B).  \end{equation*}
This equation will be useful in this paper.

\begin{cor}In a stratified $\CQ$-cotopological space $X$, \[\overline{A}=\bw\big\{\sub_X(A,B)\ra B\mid B~\text{is closed in $X$}\big\}\] for all $A\in Q^X$. \end{cor}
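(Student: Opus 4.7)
The plan is to prove equality by showing both directions separately, writing $C := \bw\{\sub_X(A,B)\ra B \mid B \text{ closed in } X\}$ for the right-hand side.

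For the inequality $C \leq \overline{A}$, I would simply take $B = \overline{A}$ in the meet defining $C$. Since $A\leq \overline{A}$, we have $\sub_X(A,\overline{A})=1$, so $\sub_X(A,\overline{A})\ra \overline{A}=1\ra\overline{A}=\overline{A}$ by Proposition 2.1(1). Because $C$ is a meet that includes this term, $C\leq \overline{A}$ follows immediately. Note that stratification is needed here merely to know that $\overline{A}$ itself is closed — which is automatic — but more importantly, stratification guarantees $\sub_X(A,B)\ra B$ is closed for closed $B$ (condition (C4)), so the meet $C$ actually lives in the cotopology.

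For the opposite inequality $\overline{A}\leq C$, it suffices to show that $\overline{A}\leq \sub_X(A,B)\ra B$ for each closed $B$. By the adjoint property stated just before the previous proposition, this is equivalent to $\sub_X(A,B)\&\overline{A}\leq B$. This is exactly where the displayed equation $\sub_X(A,B)=\sub_X(\overline{A},B)$ (valid for closed $B$ in a stratified space, established immediately after the preceding proposition) enters: rewriting the left-hand side as $\sub_X(\overline{A},B)\&\overline{A}$, the inequality then holds pointwise because by definition $\sub_X(\overline{A},B)\leq \overline{A}(x)\ra B(x)$, and applying Proposition 2.1(4) gives $\sub_X(\overline{A},B)\&\overline{A}(x)\leq B(x)$ for every $x\in X$.

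There is no real obstacle here — the corollary is essentially a direct repackaging of the equation $\sub_X(A,B)=\sub_X(\overline{A},B)$ combined with the fact (from stratification) that $\sub_X(A,B)\ra B$ is a closed set. The only thing to be careful about is invoking stratification at the right point, namely both to ensure the family being intersected consists of closed sets and to apply the equation linking $\sub_X(A,B)$ with $\sub_X(\overline{A},B)$.
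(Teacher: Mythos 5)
Your proof is correct and is essentially the argument the paper intends: the corollary is an immediate consequence of the displayed equation $\sub_X(A,B)=\sub_X(\overline{A},B)$ (together with residuation and the fact that the sets $\sub_X(A,B)\ra B$ are closed by stratification), with the reverse inequality obtained by taking $B=\overline{A}$. No gaps; your remark that $C$ lies in the cotopology is harmless extra information not needed for the equality itself.
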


Given a $\CQ$-cotopological space $(X,\tau)$, define $\Omega(\tau)\colon X\times X\lra Q$ by \[\Omega(\tau)(x,y) =\bw_{A\in\tau}(A(y)\ra A(x)).\] Then $\Omega(\tau)$ is a $\CQ$-order on $X$, called the \emph{specialization $\CQ$-order} of $(X,\tau)$ \cite{LZ06}. It is clear that each closed set in $(X,\tau)$ is a fuzzy lower set in the $\CQ$-ordered set $(X,\Omega(\tau))$.

As said before, we often write $X$, instead of $(X,\tau)$, for a $\CQ$-cotopological space. Accordingly, we will write $\Omega(X)$  for the $\CQ$-ordered set obtained by equipping $X$ with its specialization $\CQ$-order.

The correspondence $X\mapsto\Omega(X)$ defines a functor \[\Omega\colon  \CQ\text{-}{\sf CTop}\lra\CQ\text{-}{\sf Ord}.\] In particular, if $f\colon X\lra Y$ is a continuous map between $\CQ$-cotopological spaces, then $f\colon \Omega(X)\lra\Omega(Y)$ is $\CQ$-order-preserving, i.e., $\Omega(X)(x,y)\leq\Omega(Y)(f(x),f(y))$ for all $x,y\in X$.

Conversely, given a $\CQ$-ordered set $(X,R)$, the family $\Gamma(R)$ of fuzzy lower sets in $(X,R)$ forms a strong $\CQ$-cotopology on $X$, called the \emph{Alexandroff $\CQ$-cotopology} on $(X,R)$. The correspondence $(X,R)\mapsto\Gamma(X,R)=(X,\Gamma(R))$ defines a functor \[\Gamma\colon  \CQ\text{-}{\sf Ord}\lra \CQ\text{-}{\sf CTop} \] that is left adjoint to the functor $\Omega$ \cite{CZ09,LZ06}.

The following conclusion says that in  a stratified $\CQ$-cotopological space, the specialization $\CQ$-order is determined by closures of singletons, as in the classical case.

\begin{prop}\cite{Qiao} If $X$ is a stratified $\CQ$-cotopological space, then $\Omega(X)(x,y) = \overline{1_y}(x)$ for all $x,y\in X$. \end{prop}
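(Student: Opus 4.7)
My plan is to prove the equality by two inequalities, using the stratification hypothesis only for the nontrivial direction. Throughout, I will use $1_y$ for the fuzzy singleton defined by $1_y(y)=1$ and $1_y(z)=0$ for $z\neq y$, so that $\overline{1_y}(x)=\bw\{B(x)\mid B\in\tau,\ B(y)=1\}$.

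For the inequality $\Omega(X)(x,y)\leq\overline{1_y}(x)$, I would fix any closed set $B$ with $B(y)=1$. Then $B(y)\ra B(x)=1\ra B(x)=B(x)$ by Proposition 2.1(1), and hence
\[\Omega(X)(x,y)=\bw_{A\in\tau}(A(y)\ra A(x))\leq B(y)\ra B(x)=B(x).\]
Taking the infimum over all such $B$ yields $\Omega(X)(x,y)\leq\overline{1_y}(x)$. This direction does not require stratification.

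For the reverse inequality $\overline{1_y}(x)\leq\Omega(X)(x,y)$, I would fix an arbitrary closed set $A$ and consider $A(y)\ra A\in Q^X$. Here is where stratification is essential: by condition (C4), the fuzzy set $A(y)\ra A$ is itself closed. Its value at $y$ is $A(y)\ra A(y)=1$, while at any $z\neq y$ we have $1_y(z)=0\leq (A(y)\ra A)(z)$ trivially, so $1_y\leq A(y)\ra A$. Consequently $\overline{1_y}\leq A(y)\ra A$, and evaluating at $x$ gives
\[\overline{1_y}(x)\leq A(y)\ra A(x).\]
Since $A\in\tau$ was arbitrary, taking the infimum over $\tau$ produces $\overline{1_y}(x)\leq\Omega(X)(x,y)$.

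The two inequalities combine to give the claim. No step looks difficult; the only place real work happens is recognizing that stratification is exactly what makes $A(y)\ra A$ a closed set containing $1_y$, which is the mechanism by which closures of singletons can capture the full infimum defining the specialization $\CQ$-order.
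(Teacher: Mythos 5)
Your proof is correct: both inequalities are verified soundly, and the use of (C4) to make $A(y)\ra A$ a closed set containing $1_y$ is exactly the right mechanism. The paper itself states this proposition with only a citation to Qiao and gives no proof, so your argument simply supplies the standard (and expected) verification; no discrepancy with the paper arises.
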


\section{Sober $\CQ$-cotopological spaces}
Let $X$ be a topological space. A closed set $F$ in  $X$ is irreducible if it is non-empty and for any closed sets $A,B$ in $X$, $F\subseteq A\cup B$ implies either $F\subseteq A$ or $F\subseteq B$. A topological space is sober if every irreducible closed set in it is the closure of exactly one point. Sobriety is an interesting property in the realm of non-Hausdorff spaces  and it plays an important role in domain theory \cite{Gierz2003}.
In order to extend the theory of sober spaces to the fuzzy setting, the first step is to postulate  irreducible closed sets in a $\CQ$-cotopological space. Fortunately, this can be done in a natural way with the help of the fuzzy inclusion order between the closed sets in a $\CQ$-cotopological space.

\begin{defn}
A closed set $F$ in a $\CQ$-cotopological  space $X$ is irreducible if $\bv_{x\in X}F(x)=1$ and \[\sub_X(F,A\vee B) = \sub_X(F, A)\vee\sub_X(F,B)\] for all closed sets $A,B$ in $X$.\end{defn}

This definition is clearly an extension of that of irreducible closed sets in a topological space. We haste to emphasize that the fuzzy inclusion order, not the pointwise order, between closed sets is used here. The condition  $\bv_{x\in X}F(x)=1$ is   a $\CQ$-version of the requirement that $F$ is non-empty.

\begin{exmp}\begin{enumerate}[(1)]\item Let $X$ be a stratified $\CQ$-cotopological space. For any $x\in X$, the closure $\overline{1_x}$ of $1_x$ is irreducible. This follows from  that $\sub_X(\overline{1_x},A)= A(x)$ for any closed set $A$ in $X$.

\item A fuzzy lower set $\phi$ in a $\CQ$-ordered set $X$ is irreducible in the sense of Definition \ref{irreducible} if and only if $\phi$ is an irreducible closed set in the Alexandroff $\CQ$-cotopological space $\Gamma(X)$.\end{enumerate} \end{exmp}
Having the notion of irreducible closed sets (in a $\CQ$-cotopological space) at hand, we are now able to formulate the central notion of this paper.

\begin{defn}A  $\CQ$-cotopological space $X$ is sober if it is  stratified and each irreducible closed in $X$ is the closure of $1_x$ for a unique $x\in X$. \end{defn}

Write
\[\mbox{{\sf Sob}$\CQ$-{\sf  CTop}}\] for  the full
subcategory of {\sf S}$\CQ$-{\sf  CTop}  consisting of sober $\CQ$-cotopological spaces. This section concerns basic properties of sober $\CQ$-cotopological spaces. First,  we show  that the subcategory {\sf Sob}$\CQ$-{\sf  CTop} is reflective in {\sf S}$\CQ$-{\sf  CTop} and that the specialization $\CQ$-order of each sober  $\CQ$-cotopological space $X$ is  \emph{directed complete}  in the sense that every irreducible fuzzy lower set in  the $\CQ$-ordered set $\Omega(X)$ has a supremum.  
Then we will discuss the relationship between \begin{itemize}\setlength{\itemsep}{-2pt} \item sobriety and Hausdorff separation in a stratified $\CQ$-cotopological space; \item   sober topological spaces and sober $\CQ$-cotopological spaces   via the Lowen functor $\omega_\CQ$; \item sober $\CQ$-cotopological spaces and sober $\CQ$-topological spaces in the case that $\CQ$ satisfies the law of double negation. \end{itemize}

Given a stratified $\CQ$-cotopological space $X$, let \[\text{irr}(X)\] denote the set of all irreducible closed sets in $X$. For each closed set $A$ in $X$, define  \[s(A)\colon \text{irr}(X)\lra Q\] by \[s(A)(F) =\sub_X(F,A).\]
\begin{lem}Let $X$ be a stratified $\CQ$-cotopological space. \begin{enumerate}[(1)]\item $s(p_X)(F)=p$ for all $p\in Q$ and $F\in  {\rm irr}(X)$.
\item  $s(A)=s(B)\Leftrightarrow A=B$  for all closed sets $A,B$ in $X$.
\item  $s(A\vee B)=s(A)\vee s(B)$  for all closed sets $A,B$ in $X$.
\item  $s\big(\bw\limits_{j\in J}A_j\big) = \bw\limits_{j\in J}s(A_j)$  for each family $\{A_j\}_{i\in J}$ of closed sets in $X$.
\item  $s(p\ra A)= p\ra s(A)$  for all $p\in Q$ and all closed sets  $A$ in $X$.
\item  $\sub_X(A,B)=\sub_{{\rm irr}(X)}(s(A),s(B))$  for all closed sets $A,B$ in $X$. \end{enumerate}\end{lem}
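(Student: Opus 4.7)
The plan is to treat each of the six items directly, relying on two recurring tools: the fact (from the previous example) that $\overline{1_x}\in\text{irr}(X)$ for every $x$, together with the identity $\sub_X(\overline{1_x},A)=A(x)$ for any closed $A$ (which follows from the boxed equation $\sub_X(A,B)=\sub_X(\overline{A},B)$ in the stratified case, applied with $A:=1_x$); and the adjunction-style identity $p\ra\sub_X(F,A)=\sub_X(F,p\ra A)$ together with the standard meet/join identities for $\ra$ recalled in the preliminary proposition. These give an ``embedding'' of $X$ into $\text{irr}(X)$ through the points $\overline{1_x}$, which is the engine behind injectivity of $s$ and the order-isomorphism in (6).

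First I would dispatch (1), (3), (4), (5) as short calculations. For (1), $s(p_X)(F)=\bw_x(F(x)\ra p)=\bigl(\bv_x F(x)\bigr)\ra p=1\ra p=p$, where irreducibility gives $\bv_x F(x)=1$. For (3), the defining equation of an irreducible closed set is literally $\sub_X(F,A\vee B)=\sub_X(F,A)\vee\sub_X(F,B)$, so the identity holds pointwise in $F$. For (4), commute two meets: $\sub_X(F,\bw_j A_j)=\bw_x\bw_j(F(x)\ra A_j(x))=\bw_j\sub_X(F,A_j)$. For (5), note $p\ra A$ is closed by stratification and then $\sub_X(F,p\ra A)=p\ra\sub_X(F,A)$ by the identity recalled for $\sub_X$.

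For (2), the nontrivial direction is $s(A)=s(B)\Rightarrow A=B$. Evaluating at the irreducible closed set $\overline{1_x}$ gives $A(x)=\sub_X(\overline{1_x},A)=s(A)(\overline{1_x})=s(B)(\overline{1_x})=B(x)$ for every $x\in X$, so $A=B$. For (6), one inequality uses transitivity of $\sub_X$: for every $F\in\text{irr}(X)$, $\sub_X(F,A)\,\&\,\sub_X(A,B)\leq\sub_X(F,B)$, hence $\sub_X(A,B)\leq s(A)(F)\ra s(B)(F)$; meeting over $F$ gives $\sub_X(A,B)\leq\sub_{\text{irr}(X)}(s(A),s(B))$. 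The reverse inequality is again obtained by restricting the meet over $F\in\text{irr}(X)$ to the sub-family $\{\overline{1_x}\}_{x\in X}$: there $s(A)(\overline{1_x})\ra s(B)(\overline{1_x})=A(x)\ra B(x)$, and meeting over $x$ yields $\sub_X(A,B)$.

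No step presents a genuine obstacle; the only thing to be careful with is making sure, at the points where we evaluate $s$ at $\overline{1_x}$, that we are entitled to use $\sub_X(\overline{1_x},A)=A(x)$, which requires both stratification (to collapse $\sub_X(\overline{1_x},A)$ to $\sub_X(1_x,A)$) and the fact that $\overline{1_x}$ really is irreducible (already recorded in the preceding example). Once these are in hand, parts (2) and (6) follow simultaneously, and the rest are bookkeeping with the quantale identities.
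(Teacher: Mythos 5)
Your proposal is correct and follows essentially the same route as the paper: the paper only verifies item (6) explicitly, and it does so exactly as you do, getting one inequality from the $\CQ$-order (transitivity) property of $\sub_X$ and the other by restricting the meet over ${\rm irr}(X)$ to the closures $\overline{1_x}$ and using $\sub_X(\overline{1_x},A)=A(x)$, which requires stratification just as you note. Your verifications of (1)--(5) are the routine computations the paper leaves to the reader, and they are carried out correctly.
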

\begin{proof} We check (6) for example. On one hand, \[\sub_X(A,B)   \leq \bw_{F\in \text{irr}(X)}( \sub_X(F,A) \ra \sub_X(F,B))  = \sub_{\text{irr}(X)}(s(A),s(B)).\] On the other hand,
\begin{align*} \sub_{\text{irr}(X)}(s(A),s(B))  & =\bw_{F\in \text{irr}(X)}(s(A)(F)\ra s(B)(F)) \\ & = \bw_{F\in \text{irr}(X)}(\sub_X(F,A)\ra \sub_X(F,B)) \\ & \leq \bw_{x\in X}(\sub_X(\overline{1_x}, A)\ra \sub_X(\overline{1_x},B)) \\ & = \bw_{x\in X}(A(x)\ra B(x)) \\ & = \sub_X(A,B). \end{align*} The  proof is finished. \end{proof}

By the above lemma, \[\{s(A)\mid A~\text{is a closed set of $X$}\}\] is a stratified $\CQ$-cotopology on $\text{irr}(X)$. We will write $s(X)$, instead of $\text{irr}(X)$, for the resulting $\CQ$-cotopological space.

\begin{prop}\label{s(X) is sober}  $s(X)$ is   sober for each stratified $\CQ$-cotopological space $X$. \end{prop}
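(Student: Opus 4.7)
The plan is to verify the two defining conditions of sobriety for $s(X)$: stratification and the bijective correspondence between points and irreducible closed sets. Stratification is essentially free from item (5) of the preceding lemma, since $s(p\ra A)=p\ra s(A)$ shows the family $\{s(A)\mid A\text{ closed in }X\}$ is closed under $p\ra(-)$.

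The central computation I would do first is to identify the closure of the singleton-like fuzzy set $1_F$ in $s(X)$ for each $F\in\mathrm{irr}(X)$. Since the closed sets of $s(X)$ are exactly the $s(B)$'s, and $1_F\leq s(B)$ means $s(B)(F)=\sub_X(F,B)=1$, i.e.\ $F\leq B$, the closure $\overline{1_F}$ is the pointwise meet of $\{s(B)\mid F\leq B,\ B\text{ closed}\}$. Because $F$ itself is closed and is the smallest such $B$, and $s$ is monotone, this meet collapses to $s(F)$. So $\overline{1_F}=s(F)$, and by item (2) of the lemma $s$ is injective, which gives uniqueness of the point $F$ representing a given $s(F)$.

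The main content is to show that every irreducible closed set of $s(X)$ has this form. Given such $\Phi$, write $\Phi=s(A)$ for some closed $A$ in $X$; I must show $A\in\mathrm{irr}(X)$. The binary ``union'' condition is immediate: using $s(B_1)\vee s(B_2)=s(B_1\vee B_2)$ (item (3)) and translating via the isometry $\sub_X(A,B)=\sub_{s(X)}(s(A),s(B))$ of item (6) turns irreducibility of $s(A)$ in $s(X)$ into $\sub_X(A,B_1\vee B_2)=\sub_X(A,B_1)\vee\sub_X(A,B_2)$. The non-trivial point I anticipate as the main obstacle is the ``non-empty'' condition $\bv_{x\in X}A(x)=1$ from the corresponding condition $\bv_{F\in\mathrm{irr}(X)}s(A)(F)=1$. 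For this I plan the following: for each $F\in\mathrm{irr}(X)$ and each $x\in X$, one has $\sub_X(F,A)\&F(x)\leq A(x)$; taking the join over $x$ and using that $\&$ distributes over joins on the right yields $\sub_X(F,A)=\sub_X(F,A)\&\bv_x F(x)\leq\bv_x A(x)$; joining over $F$ then gives $1=\bv_F\sub_X(F,A)\leq\bv_x A(x)$.

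Putting the pieces together: $A\in\mathrm{irr}(X)$ is a point of $s(X)$ with $\overline{1_A}=s(A)=\Phi$, and uniqueness follows from $s(F)=s(G)\Rightarrow F=G$. This establishes that $s(X)$ is sober.
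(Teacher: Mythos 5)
Your proposal is correct and follows essentially the same route as the paper: you reduce sobriety of $s(X)$ to showing that $\overline{1_F}=s(F)$ (with uniqueness from injectivity of $s$) and that $s(A)$ irreducible in $s(X)$ forces $A$ irreducible in $X$, proving the non-emptiness condition by the same $\sub_X(F,A)\&F(x)\leq A(x)$ computation and the binary condition via items (3) and (6) of the lemma. No gaps; your explicit check of $\overline{1_F}=s(F)$ and of stratification via item (5) just spells out what the paper states without proof.
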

\begin{proof}First of all, we note that for each irreducible closed set $F$ in $X$, the closure of $1_F$ in $s(X)$ is given by $s(F)$. So, it suffices to show that for each  closed set $A$ in $X$, if $s(A)$ is  irreducible in $s(X)$, then $A$ is irreducible in $X$. We prove the conclusion in two steps.

\textbf{Step 1}. $\bv_{x\in X}A(x)=1$.  Since $s(A)$ is an irreducible closed set in $s(X)$, it holds that \[\bv_{F\in s(X)}s(A)(F)= \bv_{F\in s(X)}\sub_X(F,A)=1.\]  Since for each $F\in s(X)$ and $x\in X$, \[F(x)\&\sub_X(F,A)=F(x)\& \bw_{z\in X}(F(z)\ra A(z))\leq A(x),\] it follows that \begin{align*}\bv_{x\in X}A(x)&\geq \bv_{x\in X}\bv_{F\in s(X)} F(x)  \& \sub_X(F,A) \\ & = \bv_{F\in s(X)}\bv_{x\in X} F(x)\& \sub_X(F,A)\\ &= \bv_{F\in s(X)}\sub_X(F,A)\\ &=1.\end{align*}

\textbf{Step 2}. $\sub_X(A,B\vee C)= (\sub_X(A,B))\vee(\sub_X(A,C))$ for all closed sets $B, C$ in $X$. Since $s(A)$ is irreducible in $s(X)$, \begin{align*}\sub_X(A,B\vee C)&= \sub_{s(X)}(s(A), s(B \vee  C)) \\&= \sub_{s(X)}(s(A),s(B)\vee s(C))\\ & = \sub_{s(X)}(s(A),s(B))\vee\sub_{s(X)}(s(A),s(C))\\ &= \sub_X(A,B)\vee\sub_X(A,C).\end{align*}

Therefore, $A$ is an irreducible closed set in $X$. \end{proof}

\begin{prop}For a stratified $\CQ$-cotopological space $X$, define \[\eta_X\colon  X\longrightarrow s(X)\] by  $\eta_X(x)=\overline{1_x}$. Then \begin{enumerate}[(1)] \item $\eta_X\colon  X\lra s(X)$ is   continuous.
\item $X$ is sober if and only if $\eta_X$ is a homeomorphism. \end{enumerate}    \end{prop}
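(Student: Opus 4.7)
The plan hinges on one short computation: for every closed set $A$ of $X$ and every $x\in X$,
\[\sub_X(\overline{1_x}, A) = A(x).\]
This follows from the identity $\sub_X(C,B)=\sub_X(\overline{C},B)$ valid for closed $B$ in a stratified space (displayed in the excerpt just before the Corollary): taking $C=1_x$ gives $\sub_X(\overline{1_x},A)=\sub_X(1_x,A)=\bw_{y\in X}(1_x(y)\ra A(y))=A(x)$, since the summand is $1$ at every $y\neq x$ and equals $A(x)$ at $y=x$. One could alternatively invoke Qiao's formula $\overline{1_x}(y)=\Omega(X)(y,x)$ together with the fact that each closed set is a fuzzy lower set in $\Omega(X)$, but the shortcut through $\sub_X(1_x,A)$ is quicker.

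For part (1), recall that by construction the closed sets of $s(X)$ are exactly the maps $s(A)$ with $A$ closed in $X$. For any such $A$, I would compute the preimage pointwise:
\[\eta_X^\la(s(A))(x)=s(A)(\overline{1_x})=\sub_X(\overline{1_x},A)=A(x),\]
so $\eta_X^\la(s(A))=A$, which is closed in $X$. Continuity of $\eta_X$ is immediate.

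For part (2), the implication ``$\eta_X$ is a homeomorphism $\Rightarrow$ $X$ is sober'' is essentially a reformulation: bijectivity of $\eta_X$ is precisely the condition that every irreducible closed set equals $\overline{1_x}$ for a unique $x$, while stratification is part of the hypothesis. Conversely, if $X$ is sober, sobriety yields that $\eta_X\colon X\lra s(X)$ is a bijection, and (1) already provides continuity, so only continuity of $\eta_X^{-1}$ remains. For this I would fix a closed set $B$ in $X$ and evaluate $(\eta_X^{-1})^\la(B)$ at an arbitrary $F=\overline{1_x}\in s(X)$:
\[(\eta_X^{-1})^\la(B)(F)=B(\eta_X^{-1}(F))=B(x)=\sub_X(\overline{1_x},B)=s(B)(F).\]
Hence $(\eta_X^{-1})^\la(B)=s(B)$, which is closed in $s(X)$, as required.

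I expect no genuine obstacle. The only fine point is bookkeeping: the stratification hypothesis enters exactly once, through the identity $\sub_X(C,B)=\sub_X(\overline{C},B)$, and one must remember that the $\CQ$-cotopology on $s(X)$ is defined precisely so that the $s(A)$ exhaust its closed sets, which is what makes the continuity tests reduce to preimages of these specific fuzzy sets.
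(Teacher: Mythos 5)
Your proposal is correct and takes essentially the same route as the paper: part (1) is the identical computation $\eta_X^\la(s(A))=A$ via $\sub_X(\overline{1_x},A)=A(x)$, and your verification $(\eta_X^{-1})^\la(B)=s(B)$ in part (2) is the same calculation the paper phrases as $\eta_X^\ra(B)=s(B)$ (closedness of the continuous bijection $\eta_X$). The only cosmetic difference is the implication from homeomorphism to sobriety, where you observe that bijectivity of $\eta_X$ is literally the sobriety condition, while the paper instead appeals to the fact that $s(X)$ is sober; both arguments are valid.
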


\begin{proof}(1) For any closed set $A$ in $X$ and   $x\in X$, \[\eta_X^\la(s(A))(x) = s(A)(\eta_X(x))= A(x),\] hence $\eta_X^\la(s(A))=A$. This shows that $f$ is continuous.

(2) If $X$ is sober then each irreducible closed set in $X$ is of the form $\overline{1_x}=\eta_X(x)$ for a unique $x\in X$, hence $\eta_X\colon X\lra s(X)$ is a bijection. Since for each closed set $A$ in $X$ and $x\in X$, \[\eta_X^\ra(A)(\overline{1_x})=\bv\big\{A(z)\mid\eta_X(z)= \overline{1_x}\big\}=A(x)= s(A)(\overline{1_x}), \] it follows that $\eta_X^\ra(A)=s(A)$.  This shows that $\eta_X$ is a continuous closed bijection, hence a homeomorphism. The converse conclusion is trivial, since $s(X)$ is sober by Proposition \ref{s(X) is sober}. \end{proof}

\begin{thm}\label{soberification}
Let $f\colon X\lra Y$ be a continuous map between stratified $\CQ$-cotopological spaces. If $Y$ is sober, there is a unique continuous map $f^*\colon s(X)\longrightarrow Y$ such that ~$f=f^*\circ\eta_X$. 
\end{thm}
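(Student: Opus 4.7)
The plan is to use the sobriety of $Y$ to supply $f^*(F)$ as the unique point whose closure represents the ``image'' of $F$ under $f$. Concretely, for each $F\in s(X)$ I will show $\overline{f^\ra(F)}$ is an irreducible closed set of $Y$; then by sobriety there is a unique $y_F\in Y$ with $\overline{1_{y_F}}=\overline{f^\ra(F)}$, and I set $f^*(F):=y_F$. Verifying irreducibility: non-emptiness is immediate from $\bv_y\overline{f^\ra(F)}(y)\geq\bv_x F(x)=1$; for the join-prime condition, given closed sets $A,B$ in $Y$, the equation $\sub_Y(\overline{C},D)=\sub_Y(C,D)$ for $D$ closed (displayed just after the stratification proposition), the adjunction $f^\ra\dashv f^\la$, and the fact that $f^\la$ preserves binary joins rewrite
\[\sub_Y(\overline{f^\ra(F)},A\vee B)=\sub_X(F,f^\la(A)\vee f^\la(B));\]
since $f^\la(A),f^\la(B)$ are closed in $X$ by continuity of $f$, irreducibility of $F$ splits this as a join, and reversing the equalities gives the required identity.

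Next I verify $f^*\circ\eta_X=f$ and the continuity of $f^*$. Using $f^\ra(1_x)=1_{f(x)}$ together with the adjoint inequality $\overline{1_x}\leq f^\la(\overline{f^\ra(1_x)})$ (whose right-hand side is closed by continuity), one obtains $\overline{f^\ra(\overline{1_x})}=\overline{1_{f(x)}}$, so uniqueness in the definition of $f^*$ forces $f^*(\eta_X(x))=f(x)$. For continuity, I will prove $f^{*\la}(B)=s(f^\la(B))$ for every closed set $B$ in $Y$; indeed, for $F\in s(X)$,
\begin{align*}
f^{*\la}(B)(F)&=B(f^*(F))=\sub_Y(\overline{1_{f^*(F)}},B)=\sub_Y(\overline{f^\ra(F)},B)\\
&=\sub_Y(f^\ra(F),B)=\sub_X(F,f^\la(B))=s(f^\la(B))(F),
\end{align*}
and $s(f^\la(B))$ is a closed set of $s(X)$ by the construction of its cotopology.

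Finally, for uniqueness, let $g\colon s(X)\to Y$ be continuous with $g\circ\eta_X=f$. For each closed $B$ in $Y$, continuity of $g$ gives $g^\la(B)=s(A)$ for some closed $A$ in $X$; evaluating at $\eta_X(x)=\overline{1_x}$ yields $A(x)=s(A)(\overline{1_x})=B(g(\overline{1_x}))=B(f(x))=f^\la(B)(x)$, so $A=f^\la(B)$ and hence $g^\la(B)=s(f^\la(B))=f^{*\la}(B)$. Consequently $B(g(F))=B(f^*(F))$ for every closed $B$ in $Y$ and every $F\in s(X)$, which forces $\overline{1_{g(F)}}=\overline{1_{f^*(F)}}$ (each such closure is determined by the family of closed $B$ with $B(\cdot)=1$); sobriety of $Y$ then yields $g(F)=f^*(F)$. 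The main delicate point is verifying irreducibility of $\overline{f^\ra(F)}$ in $Y$; everything else reduces to repeated use of the identity $\sub_X(A,B)=\sub_X(\overline{A},B)$ for $B$ closed and of the adjointness $f^\ra\dashv f^\la$.
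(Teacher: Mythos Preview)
Your existence argument is essentially the paper's: both define $f^*(F)$ as the unique point whose singleton closure is $\overline{f^\ra(F)}$ (the paper isolates the irreducibility of $\overline{f^\ra(F)}$ as a separate lemma, proved exactly as you do via $f^\ra\dashv f^\la$), and both verify continuity through the computation $f^{*\la}(B)=s(f^\la(B))$.

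Your uniqueness argument, however, takes a genuinely different and shorter route. The paper shows directly that $\overline{1_{g(F)}}=\overline{f^\ra(F)}$ by a two-sided estimate: for $\overline{f^\ra(F)}\leq\overline{1_{g(F)}}$ it passes through the specialization $\CQ$-order, using $F(x)=\Omega(s(X))(\eta_X(x),F)\leq\Omega(Y)(f(x),g(F))$; for the reverse inequality it writes $\overline{\eta_X^\ra(F)}=s(A)$, checks $F\leq A$, and then pushes forward under $g$ to get $\overline{f^\ra(F)}(g(F))\geq s(A)(F)=1$. You instead exploit that every closed set of $s(X)$ has the form $s(A)$ and that $A$ is recovered from $s(A)$ by evaluation at the points $\eta_X(x)$; this forces $g^\la(B)=s(f^\la(B))=f^{*\la}(B)$ for every closed $B$, so $g(F)$ and $f^*(F)$ take the same value under every closed set, hence have the same singleton closure (take $B=\overline{1_{g(F)}}$ and symmetrically), and sobriety finishes. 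Your argument avoids the specialization-order calculations entirely and makes transparent the underlying reason: $\eta_X$ is \emph{epi} with respect to continuous maps into sober spaces because $A\mapsto s(A)$ is injective. The only place your write-up is slightly loose is the parenthetical ``each such closure is determined by the family of closed $B$ with $B(\cdot)=1$''; the clean way to say it is exactly the substitution $B=\overline{1_{g(F)}}$ just mentioned.
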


\begin{lem}Let $f\colon X\lra Y$ be a continuous map between stratified $\CQ$-cotopological spaces. Then for each irreducible closed set $F$  of $X$, the closure $\overline{f^\ra(F)}$ of the image of  $F$ under $f$ is an irreducible closed set of $Y$. \end{lem}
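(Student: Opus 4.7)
The plan is to verify the two defining conditions of an irreducible closed set directly, using three tools from the earlier part of the excerpt: the adjunction $f^\ra\dashv f^\la$, the continuity of $f$ (which makes $f^\la$ send closed sets to closed sets), and the identity $\sub_X(A,B)=\sub_X(\overline{A},B)$ that holds in a stratified $\CQ$-cotopological space whenever $B$ is closed. I will denote $\overline{f^\ra(F)}$ by $G$ throughout.

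First I would dispatch the non-triviality condition $\bv_{y\in Y}G(y)=1$. Since $f^\ra(F)\leq G$, it suffices to show $\bv_{y\in Y} f^\ra(F)(y)=1$. But by the very definition of $f^\ra$,
\[
\bv_{y\in Y}f^\ra(F)(y)=\bv_{y\in Y}\bv_{f(x)=y}F(x)=\bv_{x\in X}F(x)=1,
\]
using the irreducibility of $F$.

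Next I would verify that $\sub_Y(G, A\vee B)=\sub_Y(G,A)\vee \sub_Y(G,B)$ for all closed sets $A,B$ of $Y$. The stratification identity $\sub_Y(\overline{C},D)=\sub_Y(C,D)$ (valid because $A$, $B$, and $A\vee B$ are all closed in $Y$) lets me replace $G$ by $f^\ra(F)$ throughout. Then the adjunction gives
\[
\sub_Y(f^\ra(F),A\vee B)=\sub_X(F, f^\la(A\vee B))=\sub_X(F, f^\la(A)\vee f^\la(B)),
\]
where the second equality holds because $f^\la$ preserves the pointwise operation $\vee$. Continuity of $f$ implies $f^\la(A)$ and $f^\la(B)$ are closed in $X$, so irreducibility of $F$ yields
\[
\sub_X(F, f^\la(A)\vee f^\la(B))=\sub_X(F, f^\la(A))\vee \sub_X(F, f^\la(B)).
\]
Applying the adjunction once more, together with the stratification identity, translates each summand back as $\sub_X(F,f^\la(A))=\sub_Y(f^\ra(F),A)=\sub_Y(G,A)$, and similarly for $B$. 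Chaining the equalities gives the desired distributivity.

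There is no real obstacle here; the argument is a clean assembly of the adjunction, continuity, stratification, and irreducibility of $F$. The only point that requires a moment's care is remembering to pass between $G$ and $f^\ra(F)$ via the stratification identity, which is precisely what allows the adjunction to be used on a closure.
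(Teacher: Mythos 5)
Your proof is correct and follows essentially the same route as the paper: replace $\overline{f^\ra(F)}$ by $f^\ra(F)$ via the stratification identity $\sub_Y(\overline{C},D)=\sub_Y(C,D)$ for closed $D$, pass through the adjunction $f^\ra\dashv f^\la$, and invoke irreducibility of $F$ on the closed sets $f^\la(A), f^\la(B)$. Your explicit check of the condition $\bv_{y\in Y}\overline{f^\ra(F)}(y)=1$ is a small, correct addition that the paper leaves implicit.
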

\begin{proof}For any closed sets $A,B$ in $Y$, \begin{align*}\sub_Y(\overline{f^\ra(F)},A\vee B)& =\sub_Y(f^\ra(F),A\vee B) & (A\vee B~\text{is closed})\\ &= \sub_X(F, f^\la(A\vee B))&(f^\ra\dashv f^\la)\\ &=\sub_X(F, f^\la(A))\vee \sub_X(F, f^\la(B)) &(F ~\text{is irreducible})\\ & = \sub_Y(f^\ra(F), A)\vee\sub_Y(f^\ra(F), B)\\ & = \sub_Y(\overline{f^\ra(F)}, A)\vee\sub_Y(\overline{f^\ra(F)},B),\end{align*} hence $\overline{f^\ra(F)}$   is  irreducible. \end{proof}

\begin{proof}[Proof of Theorem \ref{soberification}] \textbf{Existence}.  For each $F\in s(X)$, since $\overline{f^\ra(F)}$ is an irreducible closed set of $Y$ and $Y$ is sober, there is a unique $y\in  Y$ such that $\overline{f^\ra(F)}$ equals the closure of $1_y$. Define $f^*(F)$ to be this $y$. We claim that $f^*\colon s(X)\longrightarrow Y$ satisfies the conditions.

First, we show that $B\circ f^*$  is a closed set in $s(X)$ for any closed set $B$ in $Y$, hence $f^*$ is continuous. Let $A$ be the closed set $f^\la(B)=B\circ f$ in $X$. Then for any $F\in s(X)$, \begin{align*}s(A)(F) &= \sub_X(F, f^\la(B))= \sub_Y(f^\ra(F), B) \\ &= \sub_Y(\overline{f^\ra(F)}, B) = \sub_Y(\overline{1_{f^*(F)}}, B)\\ & = B(f^*(F))=B\circ f^*(F),\end{align*} thus, $B\circ f^*=s(A)$ and is closed in $s(X)$.

Second, for any $x\in X$, since \[1_{f(x)}\leq f^\ra(\overline{1_x})\leq \overline{f^\ra(1_x)} = \overline{1_{f(x)}},\] it follows that \[\overline{1_{f(x)}}=\overline{f^\ra(\overline{1_x})}= \overline{f^\ra(\eta_X(x))},\] hence $f^*(\eta_X(x))= f(x)$, showing that $f^*\circ\eta_X= f$.

Therefore, $f^*\colon s(X)\longrightarrow Y$ satisfies the conditions.

\textbf{Uniqueness}. Since $Y$ is sober, it suffices to show that if $g\colon  s(X)\lra Y$ is a continuous map such that $f=g\circ\eta_X$, then $\overline{1_{g(F)}} = \overline{f^\ra(F)}$ for all $F\in s(X)$.

Since for any $E\in s(X)$, \[\Omega(s(X))(E, F)= \overline{1_F}(E) =\sub_X(E,F), \] it follows that for any $x\in X$, \begin{align*}F(x) &= \sub_X(\overline{1_x}, F) \\ &= \Omega(s(X))(\eta_X(x), F)\\ &\leq \Omega(Y)(g(\eta_X(x)), g(F))\\ &=\Omega(Y)(f(x), g(F))\\ &= \overline{1_{g(F)}}(f(x)),  \end{align*} showing that $f^\ra(F)\leq \overline{1_{g(F)}}$, hence $\overline{f^\ra(F)}\leq \overline{1_{g(F)}}$.

Conversely, since $\overline{\eta_X^\ra(F)}$ is closed in $s(X)$, there is some closed set $A$ in $X$ such that $\overline{\eta_X^\ra(F)}=s(A)$. For any $x\in X$, since \[F(x)\leq \eta_X^\ra(F)(\eta_X(x))\leq s(A)(\eta_X(x))= A(x),\] it follows that $F\leq A$, then \[g^\ra(s(A)) = g^\ra(\overline{\eta_X^\ra(F)}) \leq \overline{g^\ra\circ\eta_X^\ra(F)} = \overline{f^\ra(F)}, \] hence \[\overline{f^\ra(F)}(g(F))\geq g^\ra(s(A))(g(F))\geq s(A)(F)=1.\] Therefore,  $\overline{1_{g(F)}}\leq \overline{f^\ra(F)}$. \end{proof}

Theorem \ref{soberification} shows that the full subcategory of sober $\CQ$-cotopological spaces is reflective in   {\sf S}$\CQ$-{\sf CTop}. For any $\CQ$-cotopological space $X$, the sober space $s(X)$ is called the \emph{sobrification} of $X$.

An important property of sober spaces is that the specialization order of a sober space is directed complete  \cite{Gierz2003,Johnstone}. The following Proposition \ref{sober is dcpo} says this is also true in the quantale-valued setting if we treat irreducible fuzzy lower sets as ``directed fuzzy lower sets".

\begin{defn}\cite{Wagner97,LZ07} A supremum of a fuzzy lower set $\phi$ in a $\CQ$-ordered set $X$ is an element $\sup\phi$ in $X$ such that \[X(\sup\phi, x)=\bw_{z\in X}(\phi(z)\ra X(z,x))=\sub_X(\phi,X(-,x))\] for all $x\in X$. \end{defn}

The notion of supremum of a fuzzy lower set in a $\CQ$-ordered set is a special case of that of \emph{weighted colimit}  in category theory \cite{Kelly}.

\begin{prop}\label{sober is dcpo} Let $X$ be a sober $\CQ$-cotopological space. Then each irreducible fuzzy lower set in the specialization $\CQ$-order of $X$ has a supremum. \end{prop}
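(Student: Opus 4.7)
\medskip

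\noindent\textbf{Proof proposal.} The plan is to convert an irreducible fuzzy lower set $\phi$ in $\Omega(X)$ into an irreducible closed set of $X$ by taking its closure, use sobriety to obtain a distinguished point, and then verify this point has the required universal property.

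First I would fix an irreducible fuzzy lower set $\phi$ in $\Omega(X)$ and consider $\overline{\phi}\in\tau$. Since every closed set of $X$ is a fuzzy lower set in $\Omega(X)$, and since $X$ is stratified so that the useful identity $\sub_X(A,B)=\sub_X(\overline A,B)$ holds whenever $B$ is closed, I would verify that $\overline\phi$ is irreducible: for any closed $A,B$,
\[
\sub_X(\overline\phi,A\vee B)=\sub_X(\phi,A\vee B)=\sub_X(\phi,A)\vee\sub_X(\phi,B)=\sub_X(\overline\phi,A)\vee\sub_X(\overline\phi,B),
\]
using irreducibility of $\phi$ in the middle step (applied to the fuzzy lower sets $A,B$). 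The non-triviality condition $\bv_{x\in X}\overline\phi(x)=1$ follows from $\phi\leq\overline\phi$ and $\bv_{x\in X}\phi(x)=1$.

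Next, since $X$ is sober, there is a unique $x\in X$ with $\overline{1_x}=\overline\phi$. I claim this $x$ is the supremum of $\phi$. To show this, I would check for every $y\in X$ that $\Omega(X)(x,y)=\sub_X(\phi,\Omega(X)(-,y))$. Using Qiao's proposition, $\Omega(X)(-,y)=\overline{1_y}$ is closed, and a direct computation gives $\sub_X(\overline{1_x},\overline{1_y})=\overline{1_y}(x)=\Omega(X)(x,y)$. Applying $\overline\phi=\overline{1_x}$ and once again the identity $\sub_X(\phi,B)=\sub_X(\overline\phi,B)$ for the closed set $B=\overline{1_y}$ yields
\[
\sub_X(\phi,\Omega(X)(-,y))=\sub_X(\overline\phi,\overline{1_y})=\sub_X(\overline{1_x},\overline{1_y})=\Omega(X)(x,y),
\]
as required.

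The main obstacle, and really the only point requiring care, is the first step: checking that $\overline\phi$ qualifies as an irreducible closed set in the sense of Definition~3.1 rather than merely as an irreducible fuzzy lower set. This boils down to two observations that must be cleanly invoked—namely that closed sets are themselves fuzzy lower sets in $\Omega(X)$ (so the irreducibility of $\phi$ applies to them), and that stratification lets one exchange $\phi$ with $\overline\phi$ inside $\sub_X(-,B)$ whenever $B$ is closed. Once these are in place, sobriety supplies the point and the computation of the supremum is essentially formal.
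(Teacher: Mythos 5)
Your proposal is correct and follows essentially the same route as the paper: take the closure $\overline{\phi}$, show it is an irreducible closed set using the fact that closed sets are fuzzy lower sets together with the stratification identity $\sub_X(\phi,B)=\sub_X(\overline{\phi},B)$ for closed $B$, invoke sobriety to get the point, and verify the supremum property via Qiao's identity $\Omega(X)(z,y)=\overline{1_y}(z)$. The only cosmetic difference is that you verify the supremum by a single chain of equalities using $\sub_X(\overline{1_x},B)=B(x)$ for closed $B$, while the paper establishes the two inequalities separately; the ingredients are the same.
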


\begin{proof}Let $\phi$ be an irreducible fuzzy lower set in the $\CQ$-ordered set $\Omega(X)$. First, we show that the closure $\overline{\phi}$ of $\phi$ in $X$ is an irreducible closed set. Let $A,B$ be closed sets  in $X$. By definition of the specialization $\CQ$-order, both $A$ and $B$ are fuzzy lower sets in $\Omega(X)$. Hence \[\sub_X(\overline{\phi},A\vee B) =\sub_X(\phi,A\vee B) = \sub_X(\phi, A)\vee \sub_X(\phi, B) = \sub_X(\overline{\phi}, A)\vee \sub_X(\overline{\phi}, B),\] showing that $\overline{\phi}$   is an irreducible closed set in $X$.  Since $X$ is sober, there is a unique $a\in X$ such that $\overline{\phi}=\overline{1_a}$. We claim that $a$ is a supremum of $\phi$ in $\Omega(X)$. That is, for all $x\in X$, \[\Omega(X)(a,x) = \bw_{z\in X}(\phi(z)\ra\Omega(X)(z,x)). \] On one hand, for each $z\in X$, since $\phi(z)\leq \overline{\phi}(z) = \overline{1_a}(z) =\Omega(X)(z,a)$, it follows that \[\phi(z)\ra\Omega(X)(z,x)\geq \Omega(X)(z,a)\ra\Omega(X)(z,x)\geq \Omega(X)(a,x),\] hence \[\Omega(X)(a,x) \leq \bw_{z\in X}(\phi(z)\ra\Omega(X)(z,x)). \] On the other hand, \begin{align*} \bw_{z\in X}(\phi(z)\ra\Omega(X)(z,x))&= \sub_X(\phi, \overline{1_x}) &(\overline{1_x}(z) = \Omega(X)(z,x)) \\ & =  \sub_X(\overline{\phi},\overline{1_x}) & (\text{$\overline{1_x}$ is closed})\\ &\leq \overline{\phi}(a)\ra\overline{1_x}(a) \\ &= \Omega(X)(a,x). & (\overline{\phi}(a)=1)\end{align*} This completes the proof. \end{proof}

It is well-known that a Hausdorff topological space is always sober \cite{Gierz2003,Johnstone}. The following proposition says this is also true for $\CQ$-cotopological spaces if $\CQ$ is linearly ordered.

A $\CQ$-cotopological space $X$ is Hausdorff if the diagonal $\Delta\colon  X\times X\lra Q$, given by \[\Delta(x,y)=\begin{cases}1, & x=y,\\ 0, & x\not= y, \end{cases}\] is a closed set in the product space $X\times X$.
\begin{prop}Let $\CQ=(Q,\&)$ be a linearly ordered quantale. Then  each stratified Hausdorff $\CQ$-cotopological space is sober. \end{prop}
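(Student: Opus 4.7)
My plan is to deduce sobriety from two intermediate facts, both forced by the closedness of the diagonal: (i) the fuzzy singleton $1_x$ is closed for every $x\in X$, and (ii) every irreducible closed $F$ has support consisting of at most a single point.

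For (i), I would exhibit the slice map $i_x\colon X\lra X\times X$, $y\mapsto(x,y)$, as continuous by checking it against the subbase of the product $\CQ$-cotopology: the pullbacks of $A\circ\pi_1$, $B\circ\pi_2$ and $p_{X\times X}$ along $i_x$ are the constant $A(x)_X$, the closed set $B$, and $p_X$ respectively, all closed in $X$. Hence $1_x=\Delta\circ i_x$ is closed, so $\overline{1_x}=1_x$; this immediately gives the uniqueness part of sobriety, since $\overline{1_x}=\overline{1_y}$ then forces $1_x=1_y$ and therefore $x=y$.

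For (ii), I would first unpack the product $\CQ$-cotopology: every closed set of $X\times X$ is an intersection of finite joins of subbasic closed sets, so the diagonal admits a representation
\[
\Delta(x,y)=\bw_{j\in J}\bigl(A_j(x)\vee B_j(y)\bigr)
\]
with $A_j,B_j$ closed in $X$. Evaluating at $(z,z)$ forces $A_j(z)\vee B_j(z)=1$ for every $z$ and $j$, i.e., $A_j\vee B_j=1_X$. Now suppose $F$ is an irreducible closed set with $F(x),F(y)>0$ for some $x\neq y$. For each $j$, since $F\leq A_j\vee B_j$ trivially and $\CQ$ is a chain, the irreducibility identity $\sub_X(F,A_j\vee B_j)=\sub_X(F,A_j)\vee\sub_X(F,B_j)$ forces one of the two right-hand terms to equal $1$, so $F\leq A_j$ or $F\leq B_j$. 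Partitioning $J=J_A\cup J_B$ accordingly and using $A_j(x)\geq F(x)$ for $j\in J_A$ and $B_j(y)\geq F(y)$ for $j\in J_B$, I get
\[
0=\Delta(x,y)=\bw_{j\in J_A}\bigl(A_j(x)\vee B_j(y)\bigr)\wedge\bw_{j\in J_B}\bigl(A_j(x)\vee B_j(y)\bigr)\geq F(x)\wedge F(y)>0,
\]
a contradiction. Thus the support of $F$ is a single point $x_0$, and $\bv_z F(z)=1$ then forces $F(x_0)=1$, so $F=1_{x_0}=\overline{1_{x_0}}$, completing existence.

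The only real obstacle is unpacking the product $\CQ$-cotopology so that $\Delta$ admits the representation displayed above; after that, the linearity of $\CQ$ converts fuzzy irreducibility into its classical prime-like form $F\leq A\vee B\Rightarrow F\leq A$ or $F\leq B$, and the meet bound is a one-line computation.
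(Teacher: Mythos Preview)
Your proof is correct and follows essentially the same route as the paper: both unpack $\Delta$ as a meet $\bw_{j\in J}\bigl(A_j(x)\vee B_j(y)\bigr)$, observe from $\Delta(z,z)=1$ that $A_j\vee B_j=1_X$, and then use linearity of $\CQ$ to turn irreducibility into the prime-like dichotomy $F\leq A_j$ or $F\leq B_j$, producing the contradiction $0=\Delta(x,y)\geq F(x)\wedge F(y)>0$. The paper skips your part~(i) (closedness of $1_{x_0}$ falls out once one knows $F=1_{x_0}$ with $F$ closed) and derives the contradiction by selecting a single index $i$ with $A_i(x)\vee B_i(y)<\min\{F(x),F(y)\}$ rather than partitioning $J$, but these are cosmetic differences.
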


\begin{proof} Let $X$ be a stratified Hausdorff $\CQ$-cotopological space. In order to see that $X$ is sober, it suffices to show that if $F$ is an irreducible closed set in $X$, then $F(x)\not=0$ for at most one point $x$ in $X$.
Suppose on the contrary that there exist different $x,y$ in $X$ such that $F(x)>0$ and $F(y)>0$. Let $b=\min\{F(x),F(y)\}$. Then $b>0$ by linearity of $\CQ$. Since $X$ is Hausdorff,  there exist  two families of closed sets in $X$, say, $\{A_j\}_{j\in J}$ and $\{B_j\}_{j\in J}$,  such that \[\Delta(x,y) = \bw_{j\in J} A_j(x)\vee B_j(y) .\]  Since $\Delta(x,y)=0$, there exists some $i\in J$ such that $A_i(x)\vee B_i(y)< b$.  Since $A_i(z)\vee B_i(z)=1$ for all $z\in X$, we have  either $F\leq A_i$ or $F\leq B_i$, hence either $F(x)<b$ or $F(y)<b$, a contradiction. \end{proof}

\begin{note}\label{T2 is not sober} The assumption that $\CQ$  is linearly ordered is indispensable in the above proposition. To see this, let $\CQ=\{0,a,b,1\}$ be the Boolean algebra with four elements; let $X$ be the discrete $\CQ$-cotopological space with two points $x$ and $y$. It is clear that $X$ is Hausdorff. One can verify by enumerating all possibilities that the map $\lam$  given by $\lam(x)=a$ and $\lam(y)=b$, is an irreducible closed set in $X$, but  it is neither the closure of $1_x$ nor that of $1_y$.  \end{note}

An element $a$ in a lattice $L$ is a coprime if for all $b,c\in L$,  $a\leq b\vee c$ implies that either $a\leq b$ or $a\leq c$ \cite{Johnstone}. A complete lattice $L$ is said to have enough coprimes if every element in $L$ can be written as the join of a family of coprimes. It is clear that every linearly ordered quantale has enough coprimes and the complete lattice of closed sets in a topological space has enough coprimes.

We say that an element in a quantale $\CQ=(Q,\&)$ is a coprime if it is a coprime in the underlying lattice $Q$;  and $\CQ$ has enough coprimes if the complete lattice $Q$ has enough coprimes.

It is easily seen that if $1\in Q$ is a coprime  and if $F$ is an irreducible closed set in a $\CQ$-cotopological space $X$,   then for any closed sets $A,B$ in $X$, $F\leq A\vee B$ implies either $F\leq A$ or $F\leq B$. Said differently,  in this case, an irreducible closed set in a $\CQ$-cotopological space  is a coprime in the lattice of its closed sets.

Let $\CQ$ be a quantale and $X$ be a (crisp) topological space. We say that a map $\lam\colon X\lra Q$ is upper semicontinuous if for all $p\in Q$, \[\lam_{[p]}=\{x\in X\mid \lam(x)\geq p\}\] is a closed set in $X$.

\begin{lem}Let $\CQ$ be a quantale with enough coprimes and $X$ be a  topological space. \begin{enumerate}[(1)] \item $\lam\colon X\lra Q$ is upper semicontinuous if and only if  $\lam_{[p]}$ is a closed set in $X$ for each coprime $p$ in $Q$. \item If both $\lam, \mu\colon X\lra Q$ are upper semicontinuous then so is $\lam\vee\mu$. \item The meet  of any family of upper semicontinuous maps is upper semicontinuous.  \item If $\lam\colon X\lra Q$ is upper semicontinuous then so is $p\ra\lam$ for all $p\in Q$. \end{enumerate}
\end{lem}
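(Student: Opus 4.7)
The plan is to treat (1) as the workhorse that reduces everything to checking preimages at coprime elements, and then handle (2), (3), (4) by routine algebraic manipulations of the level sets $\lam_{[p]}$. The only direction of (1) requiring argument is the converse. Assuming that every $\lam_{[p]}$ with $p$ coprime is closed, I take an arbitrary $q\in Q$ and use the hypothesis that $\CQ$ has enough coprimes to write $q=\bv_{i\in I}p_i$ with each $p_i$ coprime. The key identity is
\[
\lam_{[q]}=\{x\in X\mid \lam(x)\geq \bv_{i\in I}p_i\}=\bigcap_{i\in I}\lam_{[p_i]},
\]
which realizes $\lam_{[q]}$ as an intersection of closed sets, hence closed.

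For (2), by (1) it suffices to show $(\lam\vee\mu)_{[p]}$ is closed for each coprime $p$. The coprimality of $p$ gives the set equality $(\lam\vee\mu)_{[p]}=\lam_{[p]}\cup\mu_{[p]}$, since $p\leq \lam(x)\vee\mu(x)$ forces $p\leq \lam(x)$ or $p\leq \mu(x)$; this is a union of two closed sets. For (3), no coprimality reduction is needed, because $\bw_j\lam_j(x)\geq p$ is equivalent to $\lam_j(x)\geq p$ for every $j$, so $\big(\bw_j\lam_j\big)_{[p]}=\bigcap_j (\lam_j)_{[p]}$ directly for every $p\in Q$. For (4), I use the adjunction $p\&q\leq r\iff q\leq p\ra r$ to compute
\[
(p\ra\lam)_{[q]}=\{x\in X\mid q\leq p\ra \lam(x)\}=\{x\in X\mid p\& q\leq \lam(x)\}=\lam_{[p\& q]},
\]
which is closed by upper semicontinuity of $\lam$.

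I do not foresee a genuine obstacle. The most delicate bookkeeping is in (2), where the coprimality of $p$ is essential for the union description of $(\lam\vee\mu)_{[p]}$; a general $p$ would give only the inclusion $\lam_{[p]}\cup\mu_{[p]}\subseteq(\lam\vee\mu)_{[p]}$, and this is precisely why (1) is phrased for the ``enough coprimes'' case and applied here. The other three computations are straightforward manipulations using distributivity of $\ra$ over meets and the adjunction between $\&$ and $\ra$ recorded in Proposition 2.1.
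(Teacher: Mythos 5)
Your proposal is correct and follows essentially the same route as the paper: (1) is obtained from the decomposition of an arbitrary element into coprimes via $\lam_{[\bigvee_i p_i]}=\bigcap_i\lam_{[p_i]}$, (2) is reduced to (1) using coprimality to get $(\lam\vee\mu)_{[p]}=\lam_{[p]}\cup\mu_{[p]}$, (3) is the direct intersection identity, and (4) uses exactly the adjunction computation $(p\ra\lam)_{[q]}=\lam_{[p\&q]}$ that the paper records. Your write-up merely makes explicit the steps the paper leaves as ``immediate'' or ``straightforward.''
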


\begin{proof} (1) follows from the fact that $\CQ$ has enough coprimes and (2) is an immediate consequence of (1). The verification of (3) is straightforward. And (4) follows from   \[(p\ra\lam)_{[q]}=\{x\in X\mid q\leq p\ra\lam(x)\}= \{x\in X\mid p\&q\leq  \lam(x)\}= \lam_{[p\&q]}\] for all $p,q\in Q$.
 \end{proof}

The above lemma shows that if $\CQ$ is a quantale with enough coprimes, then for each topological space $X$, the family of upper semicontinuous maps $X\lra Q$ forms a stratified $\CQ$-cotopology on $X$. We write $\omega_\CQ(X)$ for the resulting stratified $\CQ$-cotopological space.

For each closed set $K$ in $X$, $1_K\colon X\lra Q$ is obviously  upper semicontinuous, hence every closed set in $X$ is also a closed in $\omega_\CQ(X)$. Moreover, for any $A\subseteq X$, the closure of $1_A$ in $\omega_\CQ(X)$  equals $1_{\overline{A}}$, where $\overline{A}$ is the closure of $A$ in $X$.

The correspondence $X\mapsto \omega_\CQ(X)$ defines an embedding functor \[\omega_\CQ\colon {\sf Top}\lra{\sf S}\CQ\text{-}{\sf CTop}.\]
This functor is one of the well-known Lowen functors in fuzzy topology \cite{Lowen1976}.

The following conclusion says that for a linearly ordered quantale $\CQ$, the notion of sobriety for $\CQ$-cotopological spaces is a \emph{good extension} in the sense of Lowen \cite{Lowen1976}.

\begin{prop}\label{good extension} If $\CQ$ is a linearly ordered quantale, then  a topological space $X$ is   sober if and only if the $\CQ$-cotopological space $\omega_\CQ(X)$ is   sober. \end{prop}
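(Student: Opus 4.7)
The plan is to establish a bijection between non-empty irreducible closed subsets $K$ of the topological space $X$ and irreducible closed sets $F$ of $\omega_\CQ(X)$ via $K\mapsto 1_K$ (inverse $F\mapsto F^{-1}(1)$). Since $\overline{1_x}=1_{\overline{\{x\}}}$ in $\omega_\CQ(X)$, as recorded just before the statement, this bijection identifies closures of singletons on both sides, so the two notions of sobriety become equivalent. Throughout, I shall use that $1$ is a coprime of any linearly ordered quantale, whence by the remark preceding the definition of $\omega_\CQ$ every irreducible closed set $F$ in a $\CQ$-cotopological space satisfies the coprime implication $F\le A\vee B\Rightarrow F\le A$ or $F\le B$.

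For the ``if'' direction (sobriety of $\omega_\CQ(X)$ forces that of $X$), given a non-empty irreducible closed $K\subseteq X$, I would verify that $1_K$ is irreducible in $\omega_\CQ(X)$. Only the equation $\sub_X(1_K,A\vee B)=\sub_X(1_K,A)\vee\sub_X(1_K,B)$ needs work. Setting $\alpha=\bigwedge_{x\in K}A(x)$ and $\beta=\bigwedge_{x\in K}B(x)$, for each $\gamma>\alpha\vee\beta$ linearity furnishes $K\not\subseteq A_{[\gamma]}$ and $K\not\subseteq B_{[\gamma]}$; topological irreducibility of $K$ supplies $z\in K$ outside both, so $A(z)\vee B(z)<\gamma$ (again by linearity) and $\sub_X(1_K,A\vee B)<\gamma$. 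Linearity of $\CQ$ then lets $\gamma$ shrink to $\alpha\vee\beta$, yielding the inequality. Sobriety of $\omega_\CQ(X)$ provides a unique $x$ with $1_K=\overline{1_x}=1_{\overline{\{x\}}}$, whence $K=\overline{\{x\}}$ uniquely.

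For the ``only if'' direction, let $F$ be irreducible closed in $\omega_\CQ(X)$; the heart is to show $F$ is $\{0,1\}$-valued. For each $p<1$, the function $1_{F_{[p]}}$ is closed in $\omega_\CQ(X)$ (since $F_{[p]}$ is closed in $X$), and one checks pointwise that $F\le 1_{F_{[p]}}\vee p_X$. The coprime implication, combined with $\bigvee F=1>p$ ruling out $F\le p_X$, forces $F\le 1_{F_{[p]}}$, i.e., $F(x)<p\Rightarrow F(x)=0$. Setting $r=\bigvee\{p\in Q: p<1\}$, this confines $F$ to $\{0\}\cup[r,1]$; if $r=1$ we are done, otherwise linearity gives $[r,1]=\{r,1\}$ and a second decomposition $F\le 1_{F_{[1]}}\vee r_X$ (together with $\bigvee F=1$) rules out the value $r$. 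Thus $F=1_K$ with $K=F^{-1}(1)$, closed and non-empty in $X$. One more application of the coprime implication to $1_K\le 1_{K_1}\vee 1_{K_2}$ shows $K$ is topologically irreducible, and sobriety of $X$ produces the unique $x$ with $K=\overline{\{x\}}$, hence $F=\overline{1_x}$.

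The main obstacle is the $\{0,1\}$-valuedness step, particularly the edge case where $1\in Q$ has an immediate predecessor $r<1$: the first coprime argument only restricts $F$ to $\{0,r,1\}$, and a separate decomposition is needed to kill $r$. Linearity of $\CQ$ is indispensable everywhere---it supplies coprimality of $1$, it validates the pointwise inequality $F\le 1_{F_{[p]}}\vee p_X$, and it drives the limit step $\sub_X(1_K,A\vee B)\le\alpha\vee\beta$ in the reverse direction---and Note \ref{T2 is not sober} confirms that the conclusion genuinely fails without it.
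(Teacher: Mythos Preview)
Your proof is correct and follows the same overall strategy as the paper: identify the irreducible closed sets of $\omega_\CQ(X)$ with those of $X$ via $K\leftrightarrow 1_K$, using the same key decomposition $F\le 1_{F_{[q]}}\vee q_X$ for the hard direction. The organizational details differ slightly. For sufficiency the paper argues from below with coprimes $p\le\sub_X(1_K,\lambda\vee\mu)$ rather than from above with your $\gamma>\alpha\vee\beta$; this buys a bit more, since the paper's argument goes through whenever $\CQ$ merely has enough coprimes, not only when $\CQ$ is linear. For necessity the paper avoids your supremum $r=\bigvee\{p:p<1\}$ and its edge case by a single targeted contradiction: given $x$ with $0<\lambda(x)<1$, it chooses one value $q$ (either $\lambda(x)$ itself if no larger value below $1$ is attained, or some intermediate $\lambda(y)$ otherwise) and exhibits $\lambda\le 1_{\lambda_{[q']}}\vee q_X$ with neither inequality holding separately. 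Your uniform sweep over all $p<1$ followed by one extra step when $1$ has an immediate predecessor is an equally valid, arguably cleaner, reorganization of the same idea.
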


\begin{proof}\textbf{Necessity}. Let $\lam$ be an irreducible closed in $\omega_\CQ(X)$. Firstly, we show that for each $x\in X$, the value $\lam(x)$ is either $0$ or $1$. Suppose on the contrary that there is some $x\in X$ such that $\lam(x)$ is neither $0$ nor $1$. We proceed with two cases. If there is no element $y$ in $X$ such that $\lam(y)$ is strictly between $\lam(x)$ and $1$, let $\phi=1_{\lam_{[1]}}$ and $\psi$ be the constant map $X\lra Q$ with value $\lam(x)$. Then both $\phi$ and $\psi$ are closed in $\omega_\CQ(X)$ and $\lam\leq\phi\vee\psi$, but neither $\lam\leq\phi$ nor $\lam\leq\psi$, contradictory to that $\lam$ is irreducible. If there is some $y\in X$ such that $\lam(x)<\lam(y)<1$, let $\phi=1_{\lam_{[\lam(y)]}}$ and $\psi$ be the constant map $X\lra Q$ with value $\lam(y)$. Then both $\phi$ and $\psi$ are closed in $\omega_\CQ(X)$ and $\lam\leq\phi\vee\psi$, but  neither $\lam\leq\phi$ nor $\lam\leq\psi$, contradictory to that $\lam$ is irreducible. Therefore, $\lam=1_K$ for some closed set $K$ in $X$. Since $\lam$ is  irreducible in $\omega_\CQ(X)$, $K$ must be an irreducible closed set in $X$. Thus,  in the topological space $X$, $K$ is the closure of $\{x\}$ for a unique $x$, i.e., $K=\overline{\{x\}}$. This shows that in $\omega_\CQ(X)$,  $\lam$ is the closure of $1_x$ for a unique $x$. Therefore, $\omega_\CQ(X)$ is sober.

\textbf{Sufficiency}. We prove a bit more, that is, if $\CQ$  has enough coprimes and $\omega_\CQ(X)$ is sober, then $X$ is sober.

Let $K$ be an irreducible closed set in $X$. Firstly, we show that $1_K$ is an irreducible closed set in the $\CQ$-cotopological space $\omega_\CQ(X)$. That $1_K\colon X\lra Q$ is upper semicontinuous is trivial. For any closed sets $\lam,\mu$ in $\omega_\CQ(X)$, one has by definition that \[ \sub_X(1_K,\lam\vee\mu)= \bw_{x\in K}(\lam(x)\vee\mu(x)).\] For any coprime $p\leq \sub_X(1_K,\lam\vee\mu)$, $K$ is clearly a subset of $(\lam\vee\mu)_{[p]} = \lam_{[p]}\cup\mu_{[p]}$, hence  either $K\subseteq\lam_{[p]}$ or $K\subseteq\mu_{[p]}$, and then either $p\leq \sub_X(1_K, \lam)$ or $p\leq \sub_X(1_K, \mu)$. Therefore, \[\sub_X(1_K,\lam\vee\mu) \leq \sub_X(1_K, \lam)\vee\sub_X(1_K, \mu).\] The converse inequality \[\sub_X(1_K,\lam\vee\mu) \geq \sub_X(1_K, \lam)\vee\sub_X(1_K, \mu)\] is  trivial. Thus, $1_K$ is an irreducible closed set in $\omega_\CQ(X)$.

Since $\omega_\CQ(X)$ is sober, there is a unique $x\in X$ such that $1_K$ is the closure of $1_x$ in  $\omega_\CQ(X)$. Because  the closure of $1_x$ in $\omega_\CQ(X)$  equals $1_{\overline{\{x\}}}$, one gets $K=\overline{\{x\}}$.
 \end{proof}

\begin{note}The assumption in Proposition \ref{good extension} that $\CQ$  is linearly ordered is indispensable. To see this, let $\CQ=\{0,a,b,1\}$ be the Boolean algebra with four elements; let $X$ be the discrete  (hence sober) topological space with two points $x$ and $y$. It is clear that  $\omega_\CQ(X)$ is the discrete $\CQ$-cotopological space in Note \ref{T2 is not sober}, hence it is not sober.  \end{note}


At the end of this section, we discuss the relationship between sober $\CQ$-cotopological spaces and sober $\CQ$-topological spaces in the case that the quantale $\CQ$ satisfies the law of double negation.

\begin{defn} \label{topology} A $\CQ$-topology  on a set $X$ is a subset $\tau$ of   $Q^X$  subject to the following conditions: \begin{enumerate} \item[(O1)] $p_X\in\tau$ for all $p\in Q$; \item[(O2)] $U\wedge V\in\tau$ for all $U,V\in\tau$; \item[(O3)] $\bv_{j\in J}U_j\in\tau$ for each subfamily $\{U_j\}_{j\in J}$ of $\tau$.\end{enumerate}The pair $(X,\tau)$ is called a $\CQ$-topological space; elements in $\tau$ are called open sets of  $(X,\tau)$.\end{defn} A $\CQ$-topological space in the   above definition is also called a \emph{weakly stratified}  $\CQ$-topological space in the literature, see e.g. \cite{HS95,HS99}. A  $\CQ$-topology $\tau$ is stratified   \cite{HS99} if \begin{enumerate} \item[(O4)] $p\&U \in\tau$ for all $p\in Q$ and $U\in \tau$.\end{enumerate}

It is clear that if $\CQ=(Q,\&)$ is a frame, i.e., if $\&=\wedge$, then every $\CQ$-topology is stratified.


Let $\CQ$ be a quantale that satisfies the law of double negation. If $\tau$ is a (stratified) $\CQ$-cotopology on a set $X$, then  \[\neg(\tau)=\{\neg A \mid A\in\tau\}  \] is a (stratified) $\CQ$-topology on $X$, where $\neg A (x)=\neg(A(x))$ for all $x\in X$. Conversely, if $\tau$ is a (stratified) $\CQ$-topology on  $X$, then   \[\neg(\tau)=\{\neg A \mid A\in\tau\}  \] is a (stratified) $\CQ$-cotopology on $X$. So, for a quantale $\CQ$ that satisfies the law of double negation, we can switch freely between (stratified) $\CQ$-topologies and (stratified) $\CQ$-cotopologies, hence between open sets and closed sets.

If $\CQ=(Q,\&)$  satisfies the law of double negation, then for any $A,B\in Q^X$, \begin{equation*}\sub_X(A, B) 
= \sub_X(\neg B, \neg A) \end{equation*} and \begin{equation*}\bv_{x\in X}A(x)\&B(x)
=\neg\sub_X(A, \neg B) =\neg\sub_X(B,\neg A). \end{equation*}
These equations are clearly extensions of the properties listed in Proposition \ref{properties of negation}.

\begin{prop}\label{FR}Let $\CQ$ be a quantale that satisfies the law of double negation; and let $(X,\tau)$ be a stratified $\CQ$-cotopological space. Then for each irreducible closed set $F$ in $(X,\tau)$, the map   \[f_F\colon \neg(\tau)\lra Q, \quad f_F(U)=\bv_{x\in X}F(x)\&U(x) \] satisfies the following conditions: \begin{enumerate}[(Fr1)]\item $f_F(p_X)=p$. \item $f_F(U\wedge V)=f_F(U)\wedge f_F(V)$. \item $f_F\big(\bv_{i\in I}U_i\big) = \bv_{i\in I}f_F(U_i)$. \item $f_F(p\&U) = p\& f_F(U)$. \end{enumerate} Conversely, if $g\colon \neg(\tau)\lra Q$ is a map satisfying {\rm (Fr1)--(Fr4)}, then there is a unique irreducible closed set $F$ in $(X,\tau)$ such that $g=f_F$. \end{prop}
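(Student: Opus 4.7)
The plan rests on the negation duality provided by the law of double negation: $\neg$ interchanges $\tau$ and $\neg(\tau)$, and I shall freely use the identity $\bv_{x\in X}A(x)\&B(x)=\neg\sub_X(A,\neg B)$. Writing $f_F(U)=\neg\sub_X(F,\neg U)$, where $\neg U\in\tau$ since $U\in\neg(\tau)$, the forward direction reduces to routine verifications: (Fr1) uses $\bv_x F(x)=1$; (Fr2) combines $\neg(U\wedge V)=\neg U\vee\neg V$, irreducibility of $F$ to split $\sub_X(F,\neg U\vee\neg V)$, and $\neg(a\vee b)=\neg a\wedge\neg b$; (Fr3) and (Fr4) are direct from distributivity of $\&$ over joins and from associativity/commutativity.

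For the converse, I would dualize $g$ into a closed-set map by setting $h(A)=\neg g(\neg A)$ for $A\in\tau$. Using Proposition \ref{properties of negation} and double negation, conditions (Fr1)--(Fr4) translate to $h(p_X)=p$, $h(A\vee B)=h(A)\vee h(B)$, $h(\bw_i A_i)=\bw_i h(A_i)$, and $h(p\ra A)=p\ra h(A)$. The natural candidate is then
\[F=\bw_{A\in\tau}h(A)\ra A,\]
which lies in $\tau$ by stratification (C4) combined with closedness under arbitrary meets (C3). The easy inclusion $h(A)\leq\sub_X(F,A)$ is immediate from $F\leq h(A)\ra A$.

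The crux is the reverse inequality $\sub_X(F,A)\leq h(A)$. Applying (iii) and (iv) of $h$ to the very definition of $F$ yields
\[h(F)=\bw_{A\in\tau}h(h(A)\ra A)=\bw_{A\in\tau}(h(A)\ra h(A))=1.\]
Setting $r=\sub_X(F,A)$, one has $F\leq r\ra A$, so by monotonicity of $h$ (a consequence of (ii) or (iii)) and (iv), $1=h(F)\leq h(r\ra A)=r\ra h(A)$, i.e.\ $r\leq h(A)$. With $\sub_X(F,A)=h(A)$ established, irreducibility of $F$ falls out: $\bv_x F(x)=1$ from $h(0_X)=0$ combined with $\sub_X(F,0_X)=\neg\bv_x F(x)$, and the splitting property from join-preservation of $h$. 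The equation $g=f_F$ then follows by undoing $\neg$, since for $U=\neg A$, $f_F(U)=\neg\sub_X(F,A)=\neg h(A)=g(U)$.

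Uniqueness will come from the corollary $F'=\bw_{A\in\tau}\sub_X(F',A)\ra A$ (valid for any closed $F'$), which shows a closed set is determined by its $\sub_X$-values against closed sets; if $f_{F'}=g$ then the forward direction yields $\sub_X(F',A)=\neg g(\neg A)=h(A)=\sub_X(F,A)$, forcing $F'=F$. The main obstacle is spotting the right candidate $F$: its definition as $\bw_A h(A)\ra A$ is precisely what makes $h(F)=1$ drop out of (iii) and (iv), after which the reverse inequality becomes a one-line adjoint argument.
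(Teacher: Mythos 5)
Your argument is correct, and for the converse it takes a genuinely different route from the paper. The paper works directly with $g$ on the open-set side: it sets $F=\bw\{A\in\tau\mid g(\neg A)=0\}$, shows $g(\neg F)=0$, and then establishes $g(U)=\bv_{x\in X}F(x)\&U(x)$ by a two-sided estimate using (Fr4) and the negation identities, before checking $\bv_x F(x)=1$ and the splitting property separately. You instead dualize $g$ to $h=\neg g(\neg\,\cdot\,)$ on $\tau$ (using double negation to translate (Fr1)--(Fr4) into constant-, join-, meet- and implication-preservation), take the ``fuzzy'' candidate $F=\bw_{A\in\tau}\big(h(A)\ra A\big)$, and get $h(F)=1$ for free from meet- and implication-preservation, after which $\sub_X(F,A)=h(A)$ follows by a one-line adjoint argument; irreducibility and $g=f_F$ then drop out. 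The two candidates coincide in the end (since $\sub_X(F,A)=h(A)$ forces $F\leq h(A)\ra A$ for the paper's $F$ as well), but your construction is a cleaner order-theoretic argument that mirrors the closure formula of Corollary 2.6, and it has the added merit of making the uniqueness claim explicit via $F'=\bw_{A\in\tau}\sub_X(F',A)\ra A$, a point the paper leaves implicit. The paper's version stays closer to the frame-homomorphism computations and uses only the negation identities, which some readers may find more elementary; yours isolates where the law of double negation is actually needed (the translation between $g$ and $h$) from the purely order-theoretic core.
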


\begin{proof} We check (Fr2) for example.
\begin{align*}f_F(U\wedge V)&=\neg (\sub_X(U\wedge V,\neg F))\\ &=\neg (\sub_X(F,\neg (U\wedge V)))\\ &=\neg (\sub_X(F,\neg U\vee\neg V)) \\ &=\neg(\sub_X(F,\neg U)\vee\sub_X(F,\neg V))\\ &=\neg(\sub_X(U,\neg F))\wedge\neg(\sub_X(V,\neg F))\\ &  =f_F(U)\wedge f_F(V).\end{align*}

Conversely, suppose $g\colon \neg(\tau)\lra Q$ is a map that satisfies  (Fr1)--(Fr4). Let \[F=\bw\{A\in\tau\mid g(\neg A)=0\}.\] We show that $F$ is an irreducible closed set in $(X,\tau)$ and $g=f_F$.

\textbf{Step 1}.   $g(\neg F)=0$. This follows from (Fr3) and Proposition \ref{properties of negation}(3).

\textbf{Step 2}. $g(U) =\bv_{x\in X}F(x)\&U(x)$ for all $U\in\neg(\tau)$. On one hand, if we let $p=\sub_X(U,\neg F)$, then $p\&U\leq\neg F$, hence \[p\&g(U)=g(p\&U)\leq g(\neg F)=0.\] Therefore, \[g(U)\leq\neg \sub_X(U,\neg F)=\bv_{x\in X}F(x)\&U(x).\] On the other hand, since $g(\neg(g(U))\&U)= \neg(g(U))\&g(U)=0$, it follows that $\neg(g(U))\&U\leq \neg F$ by definition of $F$. Therefore, $\neg(g(U))\leq \sub_X(U,\neg F)$, hence \[g(U)\geq\neg \sub_X(U,\neg F)=\bv_{x\in X}F(x)\&U(x).\]

\textbf{Step 3}. $\bv_{x\in X}F(x)=1$. Otherwise, let $\bw_{x\in X}\neg F(x)=p$. Then $p\not=0$ and $p_X\leq\neg F$. Therefore, $g(\neg F)\geq g(p_X)=p$, contradictory to that $g(\neg F)=0$.

\textbf{Step 4}. $\sub_X(F,A\vee B)=\sub_X(F,A)\vee\sub_X(F,B)$ for all closed sets $A,B$ in $(X,\tau)$. In fact, \begin{align*}\sub_X(F,A\vee B)  &= \sub_X(\neg A\wedge\neg B, \neg F)\\ & =\neg(g(\neg A\wedge\neg B)) \\ & =\neg(g(\neg A))\vee \neg(g(\neg B))\\ &=\sub_X(\neg A, \neg F)\vee\sub_X(\neg B, \neg F)\\ &= \sub_X(F,A)\vee\sub_X(F,B). \end{align*}

The proof is completed. \end{proof}

For any stratified $\CQ$-topological space $(X,\tau)$ and  $x\in X$, the map \[f_x\colon \tau\lra Q, \quad f_x(U)=U(x)\] clearly satisfies (FR1)--(FR4) in Proposition \ref{FR}. This fact leads to the following:

\begin{defn}A stratified $\CQ$-topological space $(X,\tau)$ is sober if for each map $f\colon \tau\lra Q$ satisfying (FR1)--(FR4) in Proposition \ref{FR}, there is a unique $x\in X$  such that $f(U)=U(x)$ for all $U\in \tau$. \end{defn}

We leave it to the reader to check that if $\CQ=(Q,\&)$ is a frame, i.e., $\&=\wedge$, then the above definition of sober $\CQ$-topological spaces coincides with that in \cite{ZL95}.

\begin{prop} Let $\CQ$ be a quantale that satisfies the law of double negation. Then a $\CQ$-topological space $(X,\tau)$ is sober if and only if the $\CQ$-cotopological space $(X,\neg(\tau))$ is sober. \end{prop}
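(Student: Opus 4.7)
The plan is to use Proposition~\ref{FR} as a bridge between the two sobriety conditions. Applied to the stratified $\CQ$-cotopology $\neg(\tau)$ (whose negation is again $\tau$ by the law of double negation), that proposition yields a bijection $F \leftrightarrow f_F$ between the irreducible closed sets of $(X,\neg(\tau))$ and the maps $\tau\to Q$ satisfying (Fr1)--(Fr4). Sobriety of the cotopological side asks that every such $F$ equal $\overline{1_x}$ for a unique $x$, while sobriety of the topological side asks that every such $f_F$ equal evaluation $f_x(U)=U(x)$ at a unique $x$. So the task reduces to showing that these two distinguished classes correspond under the FR bijection. Stratification transfers between $\tau$ and $\neg(\tau)$ via the identity $p\ra\neg U=\neg(p\&U)$ from Proposition~\ref{properties of negation}, so the implicit stratification hypothesis in both definitions of sober is preserved on each side.

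The key lemma is that $f_{\overline{1_x}}=f_x$ for every $x\in X$, where the closure is taken in $(X,\neg(\tau))$. For any $U\in\tau$ the fuzzy set $\neg U$ is closed in $(X,\neg(\tau))$, so combining the double-negation identity $\bv_{y}A(y)\& B(y)=\neg\sub_X(A,\neg B)$, the equation $\sub_X(\overline{C},D)=\sub_X(C,D)$ valid for $D$ closed in a stratified cotopology, and $\neg\neg r=r$, I compute
\begin{align*}
f_{\overline{1_x}}(U)
&=\bv_{y\in X}\overline{1_x}(y)\& U(y)\\
&=\neg\sub_X(\overline{1_x},\neg U)\\
&=\neg\sub_X(1_x,\neg U)\\
&=\neg(\neg U(x))\\
&=U(x).
\end{align*}

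With the key lemma in hand, both directions of the biconditional are immediate. If $(X,\tau)$ is sober and $F$ is an irreducible closed set in $(X,\neg(\tau))$, then $f_F$ satisfies (Fr1)--(Fr4), hence equals $f_x=f_{\overline{1_x}}$ for a unique $x\in X$; the uniqueness clause of Proposition~\ref{FR} then forces $F=\overline{1_x}$, and the uniqueness of $x$ is inherited from the topological side. Conversely, if $(X,\neg(\tau))$ is sober and $f\colon\tau\to Q$ satisfies (Fr1)--(Fr4), then $f=f_F$ for a unique irreducible closed $F$, sobriety gives $F=\overline{1_x}$ for a unique $x$, and the key lemma yields $f=f_{\overline{1_x}}=f_x$. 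I expect the only delicate point to be the invocation of $\sub_X(\overline{1_x},\neg U)=\sub_X(1_x,\neg U)$ in the key lemma: one must remember that $\neg U$ is closed in the stratified cotopology $\neg(\tau)$ so that the equation from the corollary following the proposition characterising stratification is applicable; everything else is a formal chase through the FR bijection.
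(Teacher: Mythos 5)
Your proof is correct and takes essentially the same route as the paper: both directions pass through Proposition~\ref{FR}, and your key lemma $f_{\overline{1_x}}=f_x$ (using $\sub_X(\overline{1_x},\neg U)=\sub_X(1_x,\neg U)$ since $\neg U$ is closed in the stratified cotopology, plus double negation) is exactly the computation the paper performs in its sufficiency step. The only cosmetic difference is that, in the direction from topological to cotopological sobriety, the paper establishes $F=\overline{1_a}$ by two direct inequalities ($F(a)=1$ gives $\overline{1_a}\leq F$, and $\bv_{x}F(x)\&\neg\overline{1_a}(x)=0$ gives $F\leq\overline{1_a}$), whereas you invoke the uniqueness clause of Proposition~\ref{FR} together with the irreducibility of $\overline{1_x}$ in the stratified space $(X,\neg(\tau))$ — an equally valid repackaging of the same argument.
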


\begin{proof} \textbf{Necessity}: Let $F$ be an irreducible closed set in the $\CQ$-cotopological space $(X,\neg(\tau))$. By Proposition \ref{FR}, the map \[f_F\colon  \tau \lra Q, \quad f_F(U)=\bv_{x\in X}F(x)\&U(x) \] satisfies (Fr1)--(Fr4), hence there is a unique $a\in X$ such that $f_F(U)=U(a)$ for all $U\in\tau$. We claim that the closure of $1_a$ in  $(X,\neg(\tau))$ is $F$.  Since \[\neg F(a)= f_F(\neg F)= \bv_{x\in X}  F(x) \&(\neg F(x)) =0,\] then $F(a)=1$. Therefore, $\overline{1_a}\leq F$. Conversely, since \[\bv_{x\in X}F(x)\&\neg(\overline{1_a})(x)= f_F(\neg(\overline{1_a})) =\neg(\overline{1_a})(a)=0,\] it follows that \[F(x)\leq \neg(\neg(\overline{1_a})(x))= \overline{1_a}(x)\] for all $x\in X$, hence $F\leq \overline{1_a}$.

\textbf{Sufficiency}. Let $f\colon \tau\lra Q$ be a map satisfying (FR1)--(FR4). By Proposition \ref{FR}, there is an irreducible closed set $F$ in the $\CQ$-cotopological space $(X,\neg(\tau))$ such that $f(U) = \bv_{x\in X}F(x)\&U(x)$ for all $U\in\tau$.  Since $ (X,\neg(\tau))$ is sober, there is a unique $a\in X$ such that $\overline{1_a}=F$, Therefore, \begin{align*}f(U)&=\bv_{x\in X}F(x)\&U(x) \\ &= \neg\sub_X(F,\neg U) \\ &= \neg\sub_X(1_a, \neg U)&(\overline{1_a}=F, ~\neg U~\text{is  closed})\\ &=  U(a),\end{align*}
  completing the proof.  \end{proof}

\section{Examples}
This section discusses the sobriety of some natural $\CQ$-cotopological spaces in the case that $\CQ$ is  the unit interval $[0,1]$ endowed with a (left) continuous t-norm. In this section, we will  write $\underline{a}$, instead of $a_{[0,1]}$, for the constant map $[0,1]\lra[0,1]$ with value $a$.

Let $\CQ=([0,1],\&)$ with $\&$ being a (left) continuous t-norm   on $[0,1]$. We consider three $\CQ$-cotopologies on $[0,1]$: \begin{itemize}\setlength{\itemsep}{-2pt} \item $\tau_{C\&}$:   the stratified $\CQ$-cotopology   on $[0,1]$ generated by   $\{{\rm id}\}$ as a subbasis; \item $\tau_{S\&}$: the strong $\CQ$-cotopology on $[0,1]$ generated by $\{{\rm id}\}$ as a subbasis; \item $\tau_{A\&}$: the Alexandroff $\CQ$-cotopology  on the $\CQ$-ordered set  $([0,1], d_R)$, where $d_R(x,y)=y\ra x$. \end{itemize}

First of all, we list some facts about these $\CQ$-cotopologies.

\begin{enumerate}[(F1)] \item  A closed set in $([0,1],\tau_{A\&})$ is, by definition, a $\CQ$-order-preserving map    $\phi\colon ([0,1], d_L)\lra ([0,1], d_L)$, where $d_L(x,y)=x\ra y$. So, it is easy to verify that for all $\phi\in\tau_{A\&}$: \begin{itemize} \setlength{\itemsep}{0pt} \item $\phi$ is increasing, i.e., $\phi(x)\leq\phi(y)$ whenever $x\leq y$; \item $\phi(1)=1\iff \phi\geq{\rm id}$. \end{itemize}

\item For each $x\in[0,1]$, the closure of $1_x$ in $([0,1],\tau_{A\&})$ is $x\ra{\rm id}$, i.e., \[\overline{1_x}= x\ra{\rm id}.\]  On one hand, $x\ra{\rm id}$ is a closed set in $([0,1],\tau_{A\&})$ and $(x\ra{\rm id})(x)=1$, hence $\overline{1_x}\leq x\ra{\rm id}$. On the other hand,  for any $\phi\in\tau_{A\&}$, if $\phi(x)=1$, then   $\phi(t)=\phi(x)\ra\phi(t)\geq x\ra t$ for all $t\leq x$, hence $\phi\geq x\ra{\rm id}$.

\item Since every Alexandroff $\CQ$-cotopology is a strong $\CQ$-cotopology and ${\rm id}\in \tau_{A\&}$, it follows that
    \[\tau_{C\&}\subseteq \tau_{S\&}\subseteq \tau_{A\&}.\] Moreover, since $x\ra{\rm id}\in\tau_{C\&}$ for all $x\in X$,  the closure of $1_x$  in both $([0,1],\tau_{C\&})$ and $([0,1],\tau_{S\&})$ is $x\ra{\rm id}$.
 \item Since finite joins and arbitrary meets of right continuous maps $[0,1]\lra[0,1]$ are right continuous, and $x\ra{\rm id}$ is right continuous for all $x\in[0,1]$,  it follows that every closed set in $([0,1],\tau_{C\&})$, as a map from $[0,1]$ to itself,  is right continuous.
\item The   space  $([0,1],\tau_{C\&})$ is initially dense in the category of stratified $\CQ$-cotopological spaces. Indeed, for each stratified $\CQ$-cotopological space $(X,\tau)$, \[  \{(X,\tau)\stackrel{A}{\lra}([0,1],\tau_{C\&})\}_{A\in\tau}\] is an initial source in the topological category {\sf S}$\CQ$-{\sf  CTop}.
\end{enumerate}

\begin{prop}\label{Sierpinski is sober} Let $\CQ=([0,1],\&)$ with $\&$ being a left continuous t-norm on $[0,1]$. Then the stratified  $\CQ$-cotopological space $([0,1],\tau_{C\&})$ is sober. \end{prop}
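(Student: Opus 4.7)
The plan is to exploit the defining feature of $\tau_{C\&}$: it is generated, as a stratified $\CQ$-cotopology on $[0,1]$, by the single closed set $\id$. Because of this, an irreducible closed set $F$ will turn out to be completely determined by the single scalar $\sub_{[0,1]}(F,\id)$. Uniqueness of the singleton-closure representation is immediate: if $\overline{1_x}=\overline{1_y}$, then by (F3) $x\ra\id=y\ra\id$; evaluating at $x$ gives $y\ra x=1$, whence $y\le x$, and symmetry gives $x=y$.

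For existence, let $F$ be an irreducible closed set in $([0,1],\tau_{C\&})$ and set $x:=\sub_{[0,1]}(F,\id)$. I compare two maps $\alpha,\beta\colon\tau_{C\&}\lra[0,1]$, namely $\alpha(A)=\sub_{[0,1]}(F,A)$ and $\beta(A)=A(x)$. Using the items of the lemma listing the properties of $s(-)$, together with the irreducibility of $F$ and the linearity of $[0,1]$, one checks that $\alpha$ sends each constant $\underline{p}$ to $p$, sends $\id$ to $x$, preserves finite joins, preserves arbitrary meets, and satisfies $\alpha(p\ra A)=p\ra\alpha(A)$. The map $\beta$ trivially enjoys all the same properties pointwise, and $\beta(\id)=x$ holds by the choice of $x$.

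Now put $\mathcal{E}:=\{A\in\tau_{C\&}\mid\alpha(A)=\beta(A)\}$. The previous paragraph shows that $\mathcal{E}$ contains $\id$ and every constant $\underline{p}$ and is closed under finite joins, arbitrary meets, and the operation $p\ra(-)$; hence $\mathcal{E}$ is itself a stratified $\CQ$-cotopology on $[0,1]$ containing $\id$. By the minimality of $\tau_{C\&}$ among such cotopologies, this forces $\mathcal{E}=\tau_{C\&}$, so $\sub_{[0,1]}(F,A)=A(x)$ for every closed $A$. Setting $A=F$ gives $F(x)=\sub_{[0,1]}(F,F)=1$, whence $\overline{1_x}=x\ra\id\le F$; setting $A=\overline{1_x}=x\ra\id$ gives $\sub_{[0,1]}(F,\,x\ra\id)=(x\ra\id)(x)=1$, whence $F\le\overline{1_x}$. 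Together, $F=\overline{1_x}$, which is precisely what sobriety requires.

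The step demanding the most care is the bookkeeping that $\alpha$ really respects every operation used to generate $\tau_{C\&}$ from $\{\id\}$, most notably commutation with $p\ra(-)$ and the normalization $\alpha(\underline{p})=p$, which in turn uses $\bv_{t\in[0,1]}F(t)=1$ in an essential way. Once those routine verifications are in place, the ``agree on a subbasis, hence agree everywhere'' argument closes the proof cleanly.
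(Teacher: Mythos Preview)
Your argument is correct and takes a genuinely different route from the paper's proof. The paper works analytically: for an irreducible closed $\phi$ it sets $x=\inf\{t\mid\phi(t)=1\}$, uses the right-continuity of closed sets in $\tau_{C\&}$ (fact (F4)) to get $\phi(x)=1$, and then obtains a contradiction by exhibiting explicit closed sets $(y\ra\id)$ and $\underline{\phi(y)}$ whose join covers $\phi$ without either covering it alone. Your proof is algebraic: you set $x=\sub_{[0,1]}(F,\id)$, observe that both $\alpha=\sub_{[0,1]}(F,-)$ and $\beta={\rm ev}_x$ preserve all the closure operations defining a stratified $\CQ$-cotopology and agree on the subbasis $\{\id\}\cup\{\underline{p}\}$, and conclude by the minimality of $\tau_{C\&}$ that they agree everywhere. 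The advantage of your approach is that it bypasses the order-theoretic fact (F4) entirely and is insensitive to the concrete description of the closed sets; the paper's argument, by contrast, gives more direct structural information about $\tau_{C\&}$ and makes the irreducibility hypothesis visibly do work via a single splitting. Incidentally, your passing reference to linearity of $[0,1]$ is not actually needed: preservation of binary joins by $\alpha$ is exactly the irreducibility condition, and the remaining verifications hold in any commutative integral quantale.
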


\begin{proof}
It suffices to show that if $\phi$ is an irreducible closed set in $([0,1], \tau_{C\&})$, then $\phi= x\ra{\rm id}=\overline{1_x}$ for some $x\in [0,1]$.  Since $\phi$ is increasing and $\bv_{t\in[0,1]}\phi(t)=1$, one obtains that $\phi(1)=1$. Let \[x=\inf\{t\in[0,1]\mid \phi(t)=1\}.\] Since $\phi$ is right continuous by (F4), then $\phi(x)=1$, hence $\phi\geq\overline{1_x}=x\ra{\rm id}$.  We claim that $\phi=x\ra{\rm id}$. Otherwise, there is some $t<x$ such that $\phi(t)>x\ra t$. Since $x\ra t=\bw_{y<x}(y\ra t)$, there is  some $y\in(t,x)$ such that $\phi(t)>y\ra t$. It is clear that both $ y\ra{\rm id}$ and $\underline{\phi(y)}$ belong to $\tau_{C\&}$ and $\phi\leq (y\ra{\rm id})\vee\underline{\phi(y)}$, but neither $\phi\leq (y\ra{\rm id}) $ nor $\phi\leq  \underline{\phi(y)}$, contradictory to the assumption that $\phi$ is irreducible. \end{proof}

In the following we consider sobriety of the $\CQ$-cotopologies $\tau_{S\&}$ and $\tau_{A\&}$ in the case that $\&$ is the minimum t-norm, the product t-norm and the {\L}ukasiewicz t-norm.

\begin{prop} \label{alexandroff for product t-norm} Let $\CQ=([0,1],\&)$  with $\&$ being the product t-norm. Then the Alexandroff $\CQ$-cotopology $\tau_{AP}$ on  $([0,1], d_R)$ is not sober; but the strong $\CQ$-cotopology $\tau_{SP}$ on $[0,1]$ generated by  $\{{\rm id}\}$  is sober.   \end{prop}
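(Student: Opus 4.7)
The proof splits into two independent parts.

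\textbf{Non-sobriety of $\tau_{AP}$.} I would exhibit a single irreducible closed set in $([0,1],\tau_{AP})$ that is not the closure of any $1_x$. The candidate is the jump function $\phi_\ast\colon[0,1]\to[0,1]$ defined by $\phi_\ast(0)=0$ and $\phi_\ast(t)=1$ for $t>0$. Three things need checking. First, $\phi_\ast$ is a fuzzy lower set in $([0,1],d_R)$, hence a closed set of $\tau_{AP}$: every case of the inequality $\phi_\ast(y)\cdot(y\ra x)\le\phi_\ast(x)$ reduces to a triviality (e.g.\ for $y>x>0$, to $x/y\le 1$). Second, $\phi_\ast$ is irreducible: a direct computation using $0\ra q=1$ gives $\sub_{[0,1]}(\phi_\ast,\psi)=\bw_{t>0}\psi(t)=\lim_{t\to 0^+}\psi(t)$ for every fuzzy lower set $\psi$ (such $\psi$ is increasing in the usual order), and since the right-limit at $0$ of the pointwise maximum of two increasing functions equals the maximum of their right-limits, the distributivity $\sub(\phi_\ast,\psi_1\vee\psi_2)=\sub(\phi_\ast,\psi_1)\vee\sub(\phi_\ast,\psi_2)$ holds. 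Third, by (F2) every candidate closure $\overline{1_x}=x\ra\mathrm{id}$ is either $\underline{1}$ (for $x=0$) or satisfies $(x\ra\mathrm{id})(t)=t/x<1$ on $0<t<x$ (for $x>0$); neither matches $\phi_\ast$.

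\textbf{Sobriety of $\tau_{SP}$.} My plan is to imitate the proof of Proposition~\ref{Sierpinski is sober}. The key preliminary is that every closed set of $\tau_{SP}$ is right-continuous as a function $[0,1]\to[0,1]$: right-continuity is preserved by the generator $\mathrm{id}$, by the strong operations $p\&(\cdot)$ and $p\ra(\cdot)$, by finite joins, and by arbitrary infima of increasing right-continuous functions (for the last, given $\epsilon>0$ pick $\alpha$ with $\phi_\alpha(t_0)<(\bw_\beta\phi_\beta)(t_0)+\epsilon$; then right-continuity of $\phi_\alpha$ combined with $\bw_\beta\phi_\beta\le\phi_\alpha$ controls the right limit of $\bw_\beta\phi_\beta$ at $t_0$).

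Now let $\phi\in\tau_{SP}$ be irreducible. As a fuzzy lower set in $([0,1],d_R)$ for the product t-norm, $\phi$ is increasing and $t\mapsto\phi(t)/t$ is decreasing (from $\phi(y)\cdot(x/y)\le\phi(x)$ for $y>x>0$). Since $\bv_t\phi(t)=1$ we have $\phi(1)=1$; setting $x=\inf\{t:\phi(t)=1\}$, right-continuity gives $\phi(x)=1$, and then $\phi(t)/t\ge 1/x$ for $t<x$, so $\phi\ge x\ra\mathrm{id}=\overline{1_x}$. Suppose equality fails at some $t_0<x$ with $\phi(t_0)>t_0/x$. Since $y\mapsto t_0/y$ is continuous and reaches $t_0/x$ at $y=x$, I can pick $y\in(t_0,x)$ with $t_0/y<\phi(t_0)$; let $c:=\phi(y)<1$ and take $\psi_1=y\ra\mathrm{id}$, $\psi_2=\underline{c}$, both in $\tau_{SP}$. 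Splitting into $t\ge y$ and $t<y$ shows $\phi\le\psi_1\vee\psi_2$; but $\phi(t_0)>\psi_1(t_0)$ and $\phi(x)=1>c=\psi_2(x)$, so $\phi\not\le\psi_i$ for either $i$, contradicting irreducibility. Hence $\phi=\overline{1_x}$, and $x$ is clearly unique, proving sobriety.

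The main obstacle I expect is the right-continuity lemma for $\tau_{SP}$ (particularly the step under arbitrary infima), together with the verification that the constructed $\psi_1,\psi_2$ actually belong to $\tau_{SP}$ rather than merely to $\tau_{AP}$, so that the irreducibility assumption on $\phi\in\tau_{SP}$ really yields the desired contradiction.
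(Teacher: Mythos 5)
Your proposal is correct, and the first half (non-sobriety of $\tau_{AP}$) is exactly the paper's argument: the same witness $\phi_\ast$ ($\phi_\ast(0)=0$, $\phi_\ast(t)=1$ for $t>0$), with you supplying the verification the paper leaves as ``easy to verify''. For the sobriety of $\tau_{SP}$ you take a genuinely different route. The paper first computes the cotopologies explicitly: Step 1 shows $\tau_{CP}=\{\phi\wedge\underline{a}\mid a\in[0,1],\ \phi\in\tau_{AP},\ \phi\geq{\rm id},\ \phi\ \text{continuous}\}$, Step 2 shows $\tau_{SP}=\{\phi\in\tau_{AP}\mid\phi\ \text{continuous}\}$, and Step 3 then observes that an irreducible closed set of $\tau_{SP}$ satisfies $\phi(1)=1$, hence lies in the coarser $\tau_{CP}$, and is irreducible there, so the problem is transferred to the already sober space $([0,1],\tau_{CP})$ of Proposition \ref{Sierpinski is sober}. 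You bypass both characterization steps: you prove only the weaker lemma that every closed set of $\tau_{SP}$ is (increasing and) right-continuous, and then rerun the Proposition \ref{Sierpinski is sober} extraction argument directly inside $\tau_{SP}$, using the witnesses $y\ra{\rm id}$ and $\underline{\phi(y)}$, which belong to $\tau_{SP}$ by stratification and (C1). What each buys: the paper's route produces the explicit descriptions of $\tau_{CP}$ and $\tau_{SP}$ (of independent interest and parallel to the {\L}ukasiewicz and minimum cases treated next) and reuses Proposition \ref{Sierpinski is sober} verbatim; yours is leaner and self-contained for the sobriety claim, at the cost of repeating the contradiction argument and proving the right-continuity lemma. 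Two small points you should make explicit: in the arbitrary-infimum step of that lemma you need the members of the family to be increasing (available since $\tau_{SP}\subseteq\tau_{AP}$, or by carrying monotonicity through the generation alongside right-continuity), and your final ``contradicting irreducibility'' from $\phi\leq\psi_1\vee\psi_2$ with $\phi\not\leq\psi_1,\psi_2$ silently uses that $1$ is coprime in the linearly ordered quantale $[0,1]$, so that $\sub_X(\phi,\psi_1)\vee\sub_X(\phi,\psi_2)=1$ forces one disjunct to equal $1$; this is the same step the paper makes in Proposition \ref{Sierpinski is sober}, so it is harmless but worth a phrase.
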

\begin{proof} By   Example 3.11 in \cite{LZ06},   the Alexandroff $\CQ$-cotopology
$\tau_{AP}$ consists of maps $\phi\colon [0,1]\lra[0,1]$ subject to the following conditions:
\begin{enumerate}\item[(i)]   $\phi$ is increasing; and
\item[(ii)] $y/x \leq \phi(y)/{\phi(x)}$ whenever $x> y$, where we agree by convention that $0/0=1$.\end{enumerate}

We note that each $\phi$ in $\tau_{AP}$ is continuous on $(0,1]$, but it may be discontinuous at $0$.

It is easy to verify that the map $\phi\colon [0,1]\lra[0,1]$, given by $\phi(0)=0$ and $\phi(t)=1$ for all $t>0$, is an irreducible closed set in $([0,1],\tau_{AP})$, but it is not the closure of $1_x$ for any  $x\in[0,1]$. So,  $([0,1],\tau_{AP})$ is not sober.

In the following we prove in three steps that   $\tau_{SP}$ on $[0,1]$ is sober.

\textbf{Step 1}. We show that the stratified $\CQ$-cotopology $\tau_{CP}$  on $[0,1]$ generated by $\{{\rm id}\}$ is given by \[\tau_{CP}= \{\phi\wedge \underline{a} \mid a\in[0,1], \phi\in\CB\}, \] where, \[\CB=\{\phi\in\tau_{AP}  \mid   \phi\geq{\rm id}, \text{$\phi$ is continuous}\}.\]

It is routine to verify that $ \mathcal{C}=\{\phi\wedge \underline{a} \mid a\in[0,1], \phi\in\CB\}$ is a stratified $\CQ$-cotopology on $[0,1]$ that contains the identity ${\rm id}\colon [0,1]\lra[0,1]$, hence $\tau_{CP}\subseteq \mathcal{C}$.   To see that $\mathcal{C}$ is contained in $\tau_{CP}$,   it suffices to check that $\CB\subseteq\tau_{CP}$. Let $\phi\in\CB$. For each $x\in(0,1]$, define $g_x\colon [0,1]\lra[0,1]$ by \[g_x=\underline{\phi(x)}\vee((\phi(x)\ra x)\ra t)=\underline{\phi(x)}\vee\Big(\frac{x}{\phi(x)}\ra {\rm id}\Big).\]
Then $g_x\in \tau_{CP}$. We leave it to the reader to check that  \[\phi= \bw_{x\in(0,1]}g_x= \bw_{x\in(0,1]}\Big(\underline{\phi(x)}\vee \Big(\frac{x}{\phi(x)}\ra {\rm id}\Big)\Big),\] hence $\phi\in\tau_{CP}$. Therefore, $\mathcal{C}\subseteq \tau_{CP}$.

\textbf{Step 2}. We show that \[\tau_{SP}= \{\phi\in\tau_{AP} \mid \text{$\phi$ is continuous}\}.\] It is easily verified that $\mathcal{S}=\{\phi\in\tau_{AP} \mid \text{$\phi$ is continuous}\}$ is a strong $\CQ$-cotopology on $[0,1]$ that contains the identity ${\rm id}\colon [0,1]\lra[0,1]$, hence $\tau_{SP}\subseteq \mathcal{S}$. Conversely, for any $\phi\in\mathcal{S}$ with $\phi(1)>0$, let $\psi=\phi(1)\ra\phi = \phi/\phi(1)$. Then $\psi\in\tau_{AP}$, $\psi(1)=1$, and $\psi$ is continuous. Thus, $\psi\in\CB\subseteq \tau_{SP}$. Since $\tau_{SP}$ is strong and $\phi=\phi(1)\&\psi$, then $\phi\in\tau_{SP}$, therefore $\mathcal{S}\subseteq \tau_{SP}$.

\textbf{Step 3}.  $([0,1], \tau_{SP})$ is sober. Suppose $\phi$ is an irreducible closed set in $([0,1], \tau_{SP})$. Since $\phi$ is increasing and $\bv_{t\in[0,1]}\phi(t)=1$, one has $\phi(1)=1$, hence $\phi\in\CB\subset\tau_{CP}$.
Since $\tau_{CP}$ is coarser than $\tau_{SP}$, then $\phi$ is an irreducible closed set in the sober space $([0,1], \tau_{CP})$, and consequently, $\phi=x\ra {\rm id}$ for a unique $x\in [0,1]$. This shows that $\phi$ is the closure of $1_x$ for a unique $x\in[0,1]$ in $([0,1], \tau_{SP})$, hence  $([0,1], \tau_{SP})$ is sober.
  \end{proof}

\begin{prop}\label{alexandroff for Luka t-norm} Let $\CQ=([0,1],\&)$  with $\&$ being the {\L}ukasiewicz t-norm. Then the strong $\CQ$-cotopology $\tau_{SL}$ on $[0,1]$ generated by   $\{{\rm id}\}$  is sober and coincides with the Alexandroff $\CQ$-cotopology $\tau_{AL}$ on the $\CQ$-ordered set $([0,1], d_R)$. \end{prop}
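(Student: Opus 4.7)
My plan has three steps: first identify the Alexandroff cotopology $\tau_{AL}$ explicitly, then prove $\tau_{SL}=\tau_{AL}$ by writing every element of $\tau_{AL}$ as an arbitrary meet of strong-cotopology-basic maps built from ${\rm id}$, and finally deduce sobriety of $\tau_{SL}$ by reducing to Proposition \ref{Sierpinski is sober}. Unwinding the fuzzy-lower-set condition $\phi(y)\&(y\ra x)\leq\phi(x)$ on $([0,1],d_R)$ with the {\L}ukasiewicz implication $y\ra x=\min\{1,1-y+x\}$ yields two requirements: monotonicity (case $y\leq x$) and $\phi(y)-\phi(x)\leq y-x$ for $y\geq x$ (i.e., $1$-Lipschitz). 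Thus $\tau_{AL}$ is precisely the family of increasing, $1$-Lipschitz maps $[0,1]\to[0,1]$, and the inclusion $\tau_{SL}\subseteq\tau_{AL}$ is immediate from (F3).

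For $\tau_{AL}\subseteq\tau_{SL}$, for each $\phi\in\tau_{AL}$ and $x\in[0,1]$ I would put
\[g_x:=(1-\phi(x))\ra\bigl((1-x)\,\&\,{\rm id}\bigr),\]
which lies in $\tau_{SL}$ because ${\rm id}\in\tau_{SL}$ and $\tau_{SL}$ is strong (closed under both stratification and co-stratification). A direct {\L}ukasiewicz calculation gives $((1-x)\,\&\,{\rm id})(t)=\max\{0,t-x\}$, hence $g_x(t)=\min\{1,\phi(x)+\max\{0,t-x\}\}$, which equals $\phi(x)$ when $t\leq x$ and $\min\{1,\phi(x)+t-x\}$ when $t\geq x$. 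Monotonicity of $\phi$ handles the range $t\leq x$ and $1$-Lipschitz handles $t\geq x$, giving $\phi\leq g_x$ with $g_x(x)=\phi(x)$, so $\phi=\bw_{x\in[0,1]}g_x\in\tau_{SL}$.

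For sobriety, let $\phi$ be irreducible in $([0,1],\tau_{SL})=([0,1],\tau_{AL})$. Monotonicity together with $\bv_{t\in[0,1]}\phi(t)=1$ forces $\phi(1)=1$, and then $1$-Lipschitz yields $\phi\geq{\rm id}$, so $1+x-\phi(x)\in[0,1]$ for every $x$. Define
\[h_x:=\underline{\phi(x)}\vee\bigl((1+x-\phi(x))\ra{\rm id}\bigr),\]
which, crucially, lies in the smaller stratified cotopology $\tau_{CL}$ since only stratification is used. A computation parallel to that for $g_x$ shows $\phi\leq h_x$ with equality at $t=x$, so $\phi=\bw_{x\in[0,1]}h_x\in\tau_{CL}$. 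Because $\tau_{CL}\subseteq\tau_{SL}$, the irreducibility equation at $\phi$ in $\tau_{SL}$ restricts to the same equation in $\tau_{CL}$; hence $\phi$ is irreducible in $\tau_{CL}$, and Proposition \ref{Sierpinski is sober} then forces $\phi=x\ra{\rm id}$ for a unique $x$. I expect the main obstacle to be producing $h_x$ inside $\tau_{CL}$ (rather than merely inside the larger $\tau_{SL}$): this relies on the {\L}ukasiewicz-specific consequence $\phi\geq{\rm id}$, without which $1+x-\phi(x)$ could leave $[0,1]$ and co-stratification would be unavoidable. This is exactly what breaks down for the product t-norm in Proposition \ref{alexandroff for product t-norm}, where $\tau_{SP}\subsetneq\tau_{AP}$.
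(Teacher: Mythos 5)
Your proposal is correct and follows essentially the same route as the paper: identify $\tau_{AL}$ as the increasing $1$-Lipschitz maps, realize them as meets of basic closed sets built from ${\rm id}$ via (C1)--(C5), and reduce sobriety of $\tau_{SL}$ to Proposition \ref{Sierpinski is sober} by showing an irreducible $\phi$ satisfies $\phi(1)=1$, hence $\phi\geq{\rm id}$ and $\phi\in\tau_{CL}$. The only (harmless) variation is in proving $\tau_{AL}\subseteq\tau_{SL}$: you decompose a general $\phi$ directly as $\bw_x\bigl((1-\phi(x))\ra((1-x)\&{\rm id})\bigr)$, whereas the paper first handles $\phi\geq{\rm id}$ inside $\tau_{CL}$ and then writes $\phi=\phi(1)\&(\phi(1)\ra\phi)$ using co-stratification; both computations check out.
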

\begin{proof}
By   Example 3.10 in \cite{LZ06}, the Alexandroff $\CQ$-cotopology $\tau_{AL}$ on  $([0,1], d_R)$ consists of maps $\phi\colon [0,1]\lra[0,1]$ that satisfy the following conditions:
\begin{enumerate}\item[(i)]   $\phi$ is increasing; and
\item[(ii)] $\phi$ is 1-Lipschitz, i.e., $\phi(x)-\phi(y)\leq x-y$ for all
$x\geq y$.  \end{enumerate}

Firstly, we show that the stratified $\CQ$-cotopology $\tau_{CL}$  on $[0,1]$ generated by $\{{\rm id}\}$ is given by  \[\tau_{CL} =\{\phi\wedge \underline{a} \mid a\in[0,1], \phi\in\tau_{AL}, \phi\geq{\rm id}\}. \]
It is routine to verify that $\mathcal{C} =\{\phi\wedge \underline{a} \mid a\in[0,1], \phi\in\tau_{AL}, \phi\geq{\rm id}\}$ is a stratified $\CQ$-cotopology on $[0,1]$ that contains the identity ${\rm id}\colon [0,1]\lra[0,1]$, hence $\tau_{CL}\subseteq \mathcal{C}$. To see that $\mathcal{C}$ is contained in $\tau_{CL}$, it suffices to check that for any $\phi\in\tau_{AL}$, if $\phi\geq{\rm id}$ then $\phi\in\tau_{CL}$. For each $x\in[0,1]$, define $g_x\colon [0,1]\lra[0,1]$ by \[g_x=\underline{\phi(x)}\vee((\phi(x)\ra x)\ra {\rm id}),\] i.e., \[g_x(t)= \max\{\phi(x),\min\{\phi(x)+ t-x, 1\}\}.\]
Then $g_x\in \tau_{CL}$. We leave it to the reader to check that  \[\phi= \bw_{x\in[0,1]}g_x= \bw_{x\in[0,1]} (\underline{\phi(x)}\vee((\phi(x)\ra x)\ra {\rm id})),\] hence $\phi\in\tau_{CL}$. Therefore, $\mathcal{C}\subseteq \tau_{CL}$.

Secondly, we show that  $\tau_{SL}= \tau_{AL}$.
For any $\phi\in\tau_{AL}$ with $\phi(1)>0$, it is clear that $\phi(1)\ra\phi \in\tau_{AL}$ and $(\phi(1)\ra\phi)(1)=1$. Thus, $\phi(1)\ra\phi\in\tau_{CL}\subseteq \tau_{SL}$. Since $\tau_{SL}$ is strong and $\phi=\phi(1)\&(\phi(1)\ra\phi)$, it follows that $\phi\in\tau_{SL}$, hence $\tau_{AL}\subseteq \tau_{SL}$. The converse inclusion $\tau_{SL}\subseteq \tau_{AL}$ is trivial.

Finally, we show that $([0,1], \tau_{SL})$ is sober.
Let $\phi$ be an irreducible closed set in $([0,1], \tau_{SL})$. Since $\phi$ is increasing and $\bv_{t\in[0,1]}\phi(t)=1$, then $\phi(1)=1$, hence $\phi\in \tau_{CL}$. Since $\tau_{CL}$ is coarser than $\tau_{SL}$, it follows that $\phi$ is an irreducible closed set in the sober space $([0,1], \tau_{CL})$, hence $\phi=x\ra {\rm id}$ for a unique $x\in [0,1]$. This shows that $\phi$ is the closure of $1_x$ for a unique $x\in[0,1]$ in $([0,1], \tau_{SL})$. Therefore, $([0,1], \tau_{SL})$ is sober.   \end{proof}

\begin{prop}Let $\CQ=([0,1],\&)$  with $\&$ being the  t-norm $\min$. Then the strong $\CQ$-cotopology $\tau_{SM}$ on $[0,1]$ generated by $\{{\rm id}\}$ is sober; but the Alexandroff $\CQ$-cotopology $\tau_{AM}$ on the $\CQ$-ordered set  $([0,1], d_R)$ is not sober.
 \end{prop}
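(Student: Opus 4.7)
The plan is to follow the pattern of Propositions~\ref{alexandroff for product t-norm} and~\ref{alexandroff for Luka t-norm}: first characterise $\tau_{CM}$, $\tau_{SM}$ and $\tau_{AM}$ in the min case, then reduce sobriety of $\tau_{SM}$ to Proposition~\ref{Sierpinski is sober}, and finally exhibit an irreducible closed set in $\tau_{AM}$ that is not of the form $\overline{1_x}$. For $\&=\min$, the Alexandroff condition translates into: $\phi\in\tau_{AM}$ iff $\phi$ is increasing and, for any $y>x$, either $\phi(y)\leq x$ or $\phi(x)\geq x$; in particular, $\phi\in\tau_{AM}$ with $\phi(1)=a$ forces $\phi(x)\geq\min(a,x)$ for all $x<1$, so $\phi(1)=1$ automatically yields $\phi\geq{\rm id}$.

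For $\tau_{CM}$ I would establish
\[\tau_{CM}=\{\phi\wedge\underline{a}\mid a\in[0,1],\ \phi\text{ increasing, right-continuous and }\phi\geq{\rm id}\},\]
the nontrivial inclusion coming from the identity $\phi=\bw_{s\in[0,1]}h_s$ with $h_s=\underline{\phi(s)}\vee(s\ra{\rm id})$; the evaluation splits cleanly as $h_s(t)=\phi(s)$ when $t<s$ and $h_s(t)=1$ when $t\geq s$, so $\bw_s h_s(t)=\inf_{s>t}\phi(s)=\phi(t)$ by right-continuity. Then I would prove $\tau_{SM}=\tau_{AM}\cap\{\text{right-continuous maps}\}$: right-continuity is preserved by every strong-cotopology operation, and conversely, given a right-continuous $\phi\in\tau_{AM}$ with $\phi(1)=a>0$, the rescaling $\psi=a\ra\phi$ is right-continuous, increasing and $\geq{\rm id}$ (using the $\tau_{AM}$ condition at $y=1$), so $\psi\in\tau_{CM}$ and then $\phi=a\,\&\,\psi\in\tau_{SM}$. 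Sobriety of $\tau_{SM}$ is now immediate: an irreducible $\phi\in\tau_{SM}$ has $\phi(1)=\bv\phi=1$, hence $\phi\geq{\rm id}$ and $\phi\in\tau_{CM}$; since $\tau_{CM}\subseteq\tau_{SM}$, irreducibility in $\tau_{SM}$ implies irreducibility in $\tau_{CM}$, and Proposition~\ref{Sierpinski is sober} yields $\phi=x\ra{\rm id}=\overline{1_x}$ for a unique $x$.

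For the failure of sobriety of $\tau_{AM}$, I would take the map $\phi$ with $\phi(0)=0$ and $\phi(t)=1$ for $t>0$: it lies in $\tau_{AM}$ (the only nontrivial check is at $x=0$, where $\min(\phi(y),y\ra 0)=0=\phi(0)$), and $\bv_t\phi(t)=1$. For any $A\in\tau_{AM}$ one has $\phi(0)\ra A(0)=1$ and $\phi(t)\ra A(t)=A(t)$ for $t>0$, so $\sub_X(\phi,A)=\bw_{t>0}A(t)=\lim_{t\to 0^+}A(t)$; since right-limits of increasing functions distribute over pointwise $\vee$, $\phi$ is irreducible. Yet $\phi\neq\underline{1}=0\ra{\rm id}=\overline{1_0}$ (because $\phi(0)=0$), and for $x>0$, $\phi(x/2)=1\neq x/2=\overline{1_x}(x/2)$, so $\phi$ is the closure of no point. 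The main obstacle is the description of $\tau_{CM}$: the generator formula $g_x=\underline{\phi(x)}\vee((\phi(x)\ra x)\ra{\rm id})$ from the product and {\L}ukasiewicz proofs degenerates when $\&=\min$ (for instance, for $\phi=(1/2)\ra{\rm id}$ the associated meet at $t=3/4$ picks up the value $g_{1/4}(3/4)=3/4$ instead of $\phi(3/4)=1$), which is why I switch to the parameter-indexed $h_s=\underline{\phi(s)}\vee(s\ra{\rm id})$; the idempotency of $\min$ and right-continuity of $\phi$ together make this formula work.
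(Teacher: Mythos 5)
Your argument is correct, but it takes a longer route than the paper on the first half. For sobriety of $\tau_{SM}$ the paper uses a two-line observation you missed: since $\&=\min$, the co-stratification axiom (C5) is automatic (for any closed $A$, $p\&A=p_{[0,1]}\wedge A$ is a meet of closed sets, hence closed by (C1) and (C3)), so every stratified $\CQ$-cotopology is strong and therefore $\tau_{SM}=\tau_{CM}$; sobriety is then immediate from Proposition \ref{Sierpinski is sober}. What you do instead—characterising $\tau_{CM}$ and $\tau_{SM}$ as the right-continuous members of $\tau_{AM}$ via the decomposition $\phi=\bw_{s}\bigl(\underline{\phi(s)}\vee(s\ra{\rm id})\bigr)$ and then pushing an irreducible $\tau_{SM}$-closed set down into $\tau_{CM}$—is exactly the content of the remark the paper records \emph{after} the proof (with the same generator $g_x=\underline{\phi(x)}\vee(x\ra{\rm id})$; your observation that the product/{\L}ukasiewicz generator $\underline{\phi(x)}\vee((\phi(x)\ra x)\ra{\rm id})$ fails for $\min$ matches the paper's change of formula), so your proof is sound but proves more than is needed for this proposition. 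Incidentally, your restatement of the $\tau_{AM}$ condition (``$\phi(y)\leq x$ or $\phi(x)\geq x$'') looks weaker than the genuine lower-set condition ``$\min(\phi(y),x)\leq\phi(x)$'', but a density argument shows the two are equivalent for increasing maps, and in any case you only ever use the correct instance at $y=1$. For the second half your witness $\phi=1_{(0,1]}$ differs from the paper's family ($\phi_a(t)=1$ for $t>a$, $\phi_a(t)=t$ for $t\leq a$, $a\in(0,1)$) but works just as well; in fact you supply the irreducibility computation ($\sub(\phi,A)=\bw_{t>0}A(t)$, and right limits of increasing maps commute with finite joins), which the paper only asserts. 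So: correct, with a valid alternative counterexample, but the sobriety half can be shortened considerably by the idempotency remark.
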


\begin{proof} First of all, since $\&=\min$, every stratified $\CQ$-cotopological space is obviously a strong $\CQ$-cotopological space, then the strong $\CQ$-cotopology $\tau_{SM}$ on $[0,1]$ generated by $\{{\rm id}\}$ coincides with the stratified $\CQ$-cotopology $\tau_{CM}$ on $[0,1]$ generated by $\{{\rm id}\}$, hence it is sober by Proposition \ref{Sierpinski is sober}.

With the help of Example 3.12 in \cite{LZ06}, it can be verified that
\[\tau_{AM}=\{\phi\wedge \underline{a}\mid a\in[0,1], \text{$\phi\colon [0,1]\lra[0,1]$ is increasing}, \phi\geq{\rm id}  \}.\]
For any $a\in(0,1)$, the map $\phi\colon [0,1]\lra[0,1]$  given by \[\phi(t)=\begin{cases}1,& t>a,\\ t, & t\leq a, \end{cases}\] is an irreducible closed set in $([0,1],\tau_{AM})$, and it is not the closure of $1_x$ for any  $x\in[0,1]$, so, $([0,1],\tau_{AM})$ is not sober.
 \end{proof}
We would like to record here that  \[\tau_{CM}=\tau_{SM}=   \{\phi\in\tau_{AM}  \mid \text{$\phi$ is  right continuous}\}.\]
 It is easily verified that $\mathcal{C}= \{\phi\in\tau_{AM}  \mid \text{$\phi$ is  right continuous}\}$ is a stratified $\CQ$-cotopology on $[0,1]$ that contains the identity ${\rm id}\colon [0,1]\lra[0,1]$, hence $\tau_{CM}\subseteq \mathcal{C}$. To see the converse inclusion, we show firstly  that for all $\phi\in \mathcal{C}$, if $\phi(1)=1$ then $\phi\in\tau_{CM}$. For each $x\in[0,1]$, define $g_x\colon [0,1]\lra[0,1]$ by $g_x=\underline{\phi(x)}\vee(x\ra {\rm id})$. 
 Then $g_x\in \tau_{CM}$. Since \[\phi= \bw_{x\in[0,1]}g_x= \bw_{x\in[0,1]}(\underline{\phi(x)}\vee(x\ra {\rm id})),\] then $\phi\in\tau_{CM}$. Secondly, for any $\phi\in \mathcal{C}$, since $\phi(1)\ra\phi\in \mathcal{C}$, $(\phi(1)\ra\phi)(1)=1$, and $\phi=\underline{\phi(1)}\wedge(\phi(1)\ra\phi)$, it follows that $\phi\in \tau_{CM}$. Therefore, $\mathcal{C}\subseteq \tau_{CM}$.

\vskip 6pt

{\bf Acknowledgement} The author thanks  Dr. Hongliang Lai for the stimulating discussions during the preparation of this paper.

\end{document}